\newtheorem{theorem}{Theorem}
\newtheorem{prop}[theorem]{Proposition}
\newtheorem{cor}[theorem]{Corollary}
\newtheorem{lemma}[theorem]{Lemma}
\newtheorem{conj}[theorem]{Conjecture}
\newtheorem{question}[theorem]{Question}
\newtheorem{definition}[theorem]{Definition}
\newtheorem*{thm*}{Theorem}
\newtheorem*{rep@theorem}{\rep@title}
\newcommand{\newreptheorem}[2]{%
\newenvironment{rep#1}[1]{%
 \def\rep@title{#2 \ref{##1}}%
 \begin{rep@theorem}}%
 {\end{rep@theorem}}}
\newcommand{\R}{{\ensuremath{\mathbb{R}}}}
\newcommand{\N}{{\ensuremath{\mathbb{N}}}}
\newcommand{\Z}{{\ensuremath{\mathbb{Z}}}}
\newcommand{\Q}{{\ensuremath{\mathbb{Q}}}}
\newcommand{\st[1]}{ST_{#1}}
\title{Satellites of Infinite Rank in the Smooth Concordance Group}
\author{Matthew Hedden}
\address{Department of Mathematics, Michigan State University, East Lansing, MI \ 48823}
\email{mhedden@math.msu.edu}
\author[Juanita Pinz\'on-Caicedo]{Juanita Pinz\'on-Caicedo}
\address {Department of Mathematics, University of Notre Dame, Notre Dame, IN 46556}
\email{jpinzonc@nd.edu}
\date{}
\begin{document}

\begin{abstract}We conjecture that satellite operations are either constant or have infinite rank in the concordance group.  We reduce this to the difficult case of winding number zero satellites, and use $SO(3)$ gauge theory to provide a general criterion sufficient for the image of a satellite operation to generate an infinite rank subgroup of the smooth concordance group $\mathcal{C}$.  Our criterion applies widely; notably to many unknotted patterns for which the corresponding operators on the topological concordance group are zero.  We  raise some questions and conjectures  regarding satellite operators and their interaction with concordance.  \end{abstract}

\maketitle
\section{Introduction}
\thispagestyle{empty}
Oriented knots are said to be {\em concordant} if they cobound a properly embedded cylinder in $[0,1]\times S^3$.  One can vary the regularity of the embeddings, and typically one considers either smooth or locally flat continuous embeddings.  Either choice defines an equivalence relation under which the set of knots becomes an abelian group using the connected sum operation. These concordance groups of knots are intensely studied, with strong motivation provided by the profound distinction between the groups one defines in the topologically locally flat and smooth categories, respectively. Indeed, many questions pertaining to  4--manifolds with small topology (like the $4$--sphere) can be recast or addressed in terms of concordance. Despite the efforts of many mathematicians, the concordance groups are still rather poorly understood. In both categories, for instance, the basic question of whether the groups possess elements of any finite order other than two remains open.  \\

Some of the most powerful tools for analyzing concordance groups come from {\em satellite operations}. To define these, consider a knot in a solid torus $P\subset S^1\times D^2$.   Assign to an arbitrary knot $K$ the image of $P$ under the canonical identification of a neighborhood of $K$ with $S^1\times D^2$. The corresponding {\em satellite knot} is denoted $P(K)$. We refer to the absolute value of the algebraic intersection number of $K$ with $D^2$ as the {\em winding number} of the satellite.  Since framings of a properly embedded annulus in $[0,1]\times S^3$ are naturally identified with framings of either circle on the boundary, it follows that the satellite operator $P$ descends to concordance classes. Thus, for any knot $P\subset S^1\times D^2$ we obtain a self-map on the (smooth or topological) concordance group: 
\[ P:\mathcal{C}\to\mathcal{C}.\]
It is important to note that these maps are typically not homomorphisms. Indeed, the first author has conjectured that they essentially never are:

\begin{conj} \label{homo} The only homomorphisms on the concordance groups induced by satellite operators are the zero map, the identity, and the involution induced by orientation reversal. 
\end{conj}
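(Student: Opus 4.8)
The plan is to stratify by the winding number $w\in\mathbb{Z}$ of the pattern $P\subset S^1\times D^2$, exploiting two immediate consequences of $P$ inducing a homomorphism: since a homomorphism preserves the identity, $P(U)$ is slice (in whichever category), and $P(nK)$ is concordant to $nP(K)$ for every $n\in\mathbb{Z}$ and every $K$. It is convenient to recast the satellite construction as infection: $P(K)$ is obtained from the slice knot $\widehat P:=P(U)\subset S^3$ by infecting along a curve $\eta\subset S^3\setminus\widehat P$ isotopic to the longitude of the solid torus, with $\operatorname{lk}(\widehat P,\eta)=w$. Litherland-type formulas then control the classical invariants of $P(K)$ in terms of those of $\widehat P$, of $K$, and of $w$ --- for instance $\Delta_{P(K)}(t)\doteq\Delta_{\widehat P}(t)\,\Delta_K(t^{w})$ and $\sigma_{P(K)}(\omega)=\sigma_{\widehat P}(\omega)+\sigma_K(\omega^{w})$ --- and this is the machinery the argument runs on.

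First I would settle $w\neq 0$. Comparing degrees in $\Delta_{P(nK)}\doteq\Delta_{P(K)}^{\,n}$, using $\Delta_{nK}=\Delta_K^{\,n}$ and the satellite formula above, forces $\deg\Delta_{\widehat P}=0$ and hence $\Delta_{\widehat P}\doteq 1$, so on the pattern side $P$ is algebraically invisible. To exclude $|w|\geq 2$ one feeds connected sums into invariants that are sensitive to the reparametrization $t\mapsto t^{w}$ rather than merely additive in $K$: in the topological category, Casson--Gordon invariants and the Cochran--Orr--Teichner $L^{(2)}$-signatures of $P(K)$ for suitably chosen $K$ cannot be made additive in $K$ once $|w|\geq 2$; in the smooth category, the satellite bounds for $\tau$ and $\Upsilon$, combined with additivity under connected sum, already force exact identities such as $\tau(P(K))=w\,\tau(K)$ for every $K$, which together with explicit patterns excludes $|w|\geq 2$. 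Once $|w|=1$ is pinned down, it remains to prove that a winding-number-one pattern inducing a homomorphism is concordant, rel boundary, to a (possibly reoriented) core of the solid torus, i.e.\ that the operator $[K]\mapsto[P(K)]$ is the identity or the orientation-reversal involution on all of $\mathcal{C}$; I would attack this by running connected sums through the full battery of concordance obstructions and invoking known injectivity properties of winding-number-one satellite operators on the Cochran--Orr--Teichner solvable filtration. This identification, rather than the exclusion of large $|w|$, is the delicate point of the nonzero case.

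The main obstacle is the winding-number-zero case, which the reduction above isolates and which I do not expect to resolve by classical means: when $w=0$ the formulas above degenerate to $\Delta_{P(K)}\doteq\Delta_{\widehat P}$ and $\sigma_{P(K)}(\omega)=\sigma_{\widehat P}(\omega)$, both independent of $K$, so every abelian and algebraic invariant is automatically compatible with additivity, and one must prove that a \emph{non-constant} winding-number-zero operator simply cannot be a homomorphism. The natural target is to show that $P$ being a homomorphism forces $P(K)$ to be slice for all $K$ (whence $P\equiv 0$): one starts from $P(U)$ slice and tries to propagate sliceness along connected sums. The missing ingredient is a lower bound of the form ``$P(K_1\#K_2)$ is at least as concordance-complicated as $P(K_1)$ and $P(K_2)$ together'' that survives for winding-number-zero patterns such as iterated Whitehead doubles --- precisely the regime in which neither Heegaard Floer homology nor gauge theory currently supplies additive obstructions in the needed generality. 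I would therefore try either to leverage the $SO(3)$ instanton obstructions developed below to show that a non-constant winding-number-zero $P$ maps some explicit family $\{nK_0\}_{n\ge 1}$ to knots whose concordance classes fail to equal $\{n\,[P(K_0)]\}_{n\ge 1}$, or, short of a general argument, to prove Conjecture~\ref{homo} conditionally on the stronger conjecture that non-constant satellite operators have infinite-rank image. Settling the winding-number-zero case unconditionally is where I expect the genuine difficulty to lie.
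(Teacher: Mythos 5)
The statement you are trying to prove is Conjecture~\ref{homo}: the paper offers no proof of it, and to the best of my knowledge it is open. So there is no ``paper's proof'' to compare against, and your text should be judged as a research plan rather than as a proof. As a plan it is reasonable and its reduction by winding number mirrors how the authors themselves organize the related Conjecture~\ref{infiniterank} (non-zero winding number handled by Litherland's signature formula, winding number zero isolated as the hard case). But it is not a proof: you explicitly leave unresolved both the winding-number-zero case and the identification of a winding-number-one homomorphism with the identity or reversal, and these are precisely the points where the conjecture lives.

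Beyond the admitted gaps, one concrete step fails. You write that comparing degrees in $\Delta_{P(nK)}\doteq\Delta_{P(K)}^{\,n}$ forces $\Delta_{\widehat P}\doteq 1$. The hypothesis that $P$ is a homomorphism only gives that $P(nK)$ is \emph{concordant} to $nP(K)$, and the Alexander polynomial is not a concordance invariant; concordance yields only the Fox--Milnor factorization $\Delta_{P(nK)}(t)\,\Delta_{nP(K)}(t)\doteq f(t)f(t^{-1})$. In particular $P(U)$ slice does not force $\Delta_{\widehat P}\doteq1$ (the square-knot pattern is a counterexample), and it should not: the correct and sufficient conclusion is only that the \emph{algebraic} concordance class of $P(U)$ vanishes, which you can get directly from sliceness without the degree count. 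Relatedly, the assertion that $\tau(P(K))=w\,\tau(K)$ ``together with explicit patterns excludes $|w|\geq 2$'' is not an argument --- that identity is perfectly consistent with $P$ being a homomorphism --- and the appeal to Casson--Gordon and $L^{(2)}$-signatures ``for suitably chosen $K$'' is a hope, not a construction. I would recommend presenting this as a reduction of Conjecture~\ref{homo} to precisely stated sub-problems rather than as a proof sketch.
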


Despite their conjectural disregard for addition, satellite operations have nonetheless proved to be extremely useful for studying the structure of these groups and figure prominently in many applications and structural theorems, like \cite{cochranteichner,cochranorr,livingston-boundary,fractal,2torsionsolvable,ordertwo,polyone,bipolar,grope,Hom,Levine,injectivity,amphicheiral,reverses,MR780587,davisray}, to cite only a few. \\ 

Particularly noteworthy is work of Cochran-Harvey-Leidy \cite{fractal}, which conjectured a fractal nature of the concordance group derived from the abundance of satellite operators.  As evidence, they introduced the notion of  {\em robust doubling operators} and showed that they interact well with the Cochran-Orr-Teichner  filtration \cite{COT}.    In particular, a robust doubling operator has infinite rank and is injective on large subsets of concordance. More work in this direction came from Cochran-Davis-Ray \cite{injectivity}, who showed that many operators with non-zero winding number are injective on homological variants of the concordance group; Cochran-Harvey-Powell \cite{grope} provided further evidence that concordance is a fractal set by defining  metrics  using gropes, with respect to which winding zero satellites are contractions.  Despite these efforts, it remains unknown  whether   there exists {\em any} injective winding number zero satellite operator. Interesting questions can be made regarding surjectivity as well.  A winding number zero operator is never surjective, as its image consists of knots with bounded genus.  By using an additive concordance invariant with values  bounded by the genus, such as the Ozsv{\'ath}-Szab{\'o}-Stipsicz $\Upsilon$ invariant \cite{OSS}, one can  show that its image cannot even generate concordance; see Wang \cite{Wang} and Livingston \cite{LivingstonUpsilon,LivingstonConcordanceGenus}.    Much more subtle is the winding number one case,  addressed by Levine \cite{Levine}, who showed that there is a winding number one satellite operator whose image does not contain zero. The burgeoning literature on the structure of satellite operations motivates us to make the following conjecture:

\begin{conj}\label{infiniterank} The image of every non-constant satellite operator has infinite rank.
\end{conj}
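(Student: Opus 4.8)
The plan is to stratify satellite operators by the winding number $w\in\Z$ of the pattern $P\subset S^1\times D^2$, since the two regimes $w\neq 0$ and $w=0$ demand completely different methods; the first is classical, and essentially all of the difficulty sits in the second.

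\textbf{Nonzero winding number.} Here one stays with abelian invariants. By Litherland's satellite formula for Tristram--Levine signatures, $\sigma_{P(K)}(\omega)=\sigma_{P(U)}(\omega)+\sigma_{K}(\omega^{w})$ for every $\omega$ on the unit circle, and each $\sigma_{\omega}$ with $\omega$ a prime-power root of unity is a homomorphism $\mathcal{C}\to\Z$ (factoring through the algebraic concordance group). Take the companions to be torus knots $K_{j}=T_{2,p_{j}}$ for a sequence $p_{1}<p_{2}<\cdots$ of large distinct primes. The step functions $\omega\mapsto\sigma_{K_{j}}(\omega^{w})$ each acquire a jump at a (pulled-back) $p_{j}$-th root of unity shared with neither the other $K_{k}$ nor the fixed function $\omega\mapsto\sigma_{P(U)}(\omega)$, so these functions are $\Q$-linearly independent. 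Applying all the signature homomorphisms to a hypothetical relation $\sum_{j}a_{j}[P(K_{j})]=0$ in $\mathcal{C}\otimes\Q$ gives $\bigl(\sum_{j}a_{j}\bigr)\sigma_{P(U)}+\sum_{j}a_{j}\,\sigma_{K_{j}}(\,\cdot^{w})\equiv 0$, forcing every $a_{j}=0$. Hence $\{P(K_{j})\}$ already generates an infinite-rank subgroup whenever $w\neq 0$ --- in particular for Levine's winding-one operators and for all higher winding numbers --- and Conjecture~\ref{infiniterank} reduces to the case $w=0$.

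\textbf{Winding number zero.} Now everything just used is unavailable: for a winding-zero pattern the isometry class of the Blanchfield pairing of $P(K)$, hence the whole algebraic concordance class and every invariant of the infinite cyclic cover, depends only on $P$ and not on $K$; and for the many \emph{unknotted} patterns ($P(U)=U$) the knots $P(K)$ have trivial Alexander polynomial and so are topologically slice by Freedman, making the topological operator identically zero. The argument must therefore be smooth and must bypass all such invariants. The strategy is to associate to $P$ and an infinite family of companions $\{K_{j}\}$ (for instance iterated infections along torus knots) a sequence of auxiliary smooth four-manifolds --- branched covers of, or surgery traces on, the $P(K_{j})$ --- and to use $SO(3)$ instanton gauge theory to obstruct these from admitting the definite or acyclic fillings that would be forced if some nontrivial combination $\#_{j}a_{j}P(K_{j})$ were slice. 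Concretely one reduces to linear independence, in the homology-cobordism group, of a family of Seifert-fibered (plumbed) homology spheres occurring as branched covers, where Fintushel--Stern/Furuta-type instanton invariants (such as the $R$-invariant, or the Fr\o yshov $h$-homomorphism from instanton Floer homology) supply an obstruction additive enough to kill every nontrivial $\Q$-linear combination. Abstracting the hypotheses that make this machinery run is precisely the promised general criterion, which one then verifies directly for large families of patterns (untwisted Whitehead and Bing doubles, more general unknotted clasp patterns, and so on).

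\textbf{The main obstacle.} The winding-zero case is the crux, and it resists in two ways. First, passing from a relation among the $P(K_{j})$ in $\mathcal{C}$ to a four-manifold statement requires identifying the branched covers (or surgery traces) of the connected sums $\#_{j}a_{j}P(K_{j})$ precisely enough --- tracking orientations and the number of preimage components of the infection curve --- to land in a family the instanton invariants control, and this is delicate because connected sum in $\mathcal{C}$ interacts poorly with the satellite construction. Second, and more seriously, no single smooth invariant is known to be non-constant on the image of \emph{every} non-constant winding-zero operator; this is essentially the unsolved problem of exhibiting any injective winding-zero satellite operator. For this reason the realistic deliverable is the criterion --- sufficient but not necessary --- together with its verification for a broad class of patterns, rather than a proof of Conjecture~\ref{infiniterank} in full generality; even extending the criterion to all non-constant patterns with $P(U)$ unknotted would already be a genuine advance.
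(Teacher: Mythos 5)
Your proposal matches the paper's treatment essentially step for step: the nonzero winding number case is handled exactly as in Proposition \ref{non-zero} via Litherland's satellite formula for the Tristram--Levine signature (jump) function evaluated on torus knot companions, and the winding zero case is attacked, as in Theorem \ref{main}, by passing to branched double covers, building negative definite cobordisms to surgeries on the companion, and invoking the Furuta/Fintushel--Stern $SO(3)$ instanton obstruction governed by minimal Chern--Simons invariants. You also correctly recognize that this yields only a sufficient criterion (in the paper, nonvanishing of the rational self-linking of the framed lift of $\partial D^2$ in $\Sigma(P(U))$) rather than a proof of the full conjecture, which indeed remains open.
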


Since, in light of \Cref{homo}, we have no reason to expect the image to be a subgroup, rank should be interpreted as the rank of the subgroup generated by the image.  It is relatively easy to verify the conjecture in the case of patterns with non-zero winding number, since the algebraic concordance class is additive in an appropriate sense under satellites.  We make this precise in \Cref{non-zero} below. \Cref{infiniterank} then reduces to the winding number zero case, which is significantly harder.  The purpose of this article is to provide a general criterion  to guarantee that such an operator has infinite rank.  To state our result, we recall that the {\em rational linking number} between disjoint curves $\gamma,\eta$ in a rational homology sphere is defined to be $1/d$ times the algebraic intersection number of $\gamma$ with a $2$-chain whose boundary maps to $d\eta$.  Its reduction modulo $\Z$ is the linking form on first homology.   Note that the rational linking number assigns a number to a {\em framed} curve, defined to be the linking of the curve with a push-off defined by the framing. This generalizes the familiar manner in which  framings of  curves in  homology spheres are expressed as integers.  In these terms, we have our main theorem

\begin{theorem}{\label{main}} Let $P\subset S^1\times D^2$ be a pattern with winding number zero, and consider the branched double cover $\Sigma(P(U))$. If $\partial D^2$  has framed lifts to $\Sigma(P(U))$ with non-zero rational linking number, then $P:\mathcal{C}\to\mathcal{C}$ has infinite rank.
\end{theorem}

Here, $\partial D^2$ is equipped with the framing induced by the disk it bounds, and this framing lifts to $\Sigma(P(U))$.  It is interesting to compare our result to that of Cochran-Harvey-Leidy \cite{fractal}.  Robust doubling operators have the property that the Blanchfield self-pairing of a lift of $\partial D^2$ to the infinite cyclic cover of $P(U)$ is non-trivial.  If the homology class of the lift of $\partial D^2$ to $\Sigma(P(U))$ has non-trivial self-pairing under the $\Q/\Z$--valued linking form, it follows easily that its lift to the infinite cyclic cover will have non-trivial Blanchfield self-pairing as well. We do not, however, require any conditions on isotropic submodules of linking forms as in \cite{fractal}. Moreover, our results extend to the case where the branched cover $\Sigma(P(U))$ is a homology sphere (with {\em trivial} linking form). In particular, patterns with $P(U)$ an unknot are of primary interest. In such cases the image of the satellite operator consists of topologically slice knots, so that $P$ acts as zero on the topological concordance group. The methods of \cite{fractal} do not apply in this setting, being manifestly topological. \\

Our result is proved in the context of $SO(3)$ gauge theory, and uses instanton moduli spaces of adapted bundles over 4--manifolds in conjunction with the Chern-Simons invariants of flat connections on the 3--manifolds arising as cross sections of their ends. This technique was pioneered by Furuta \cite{furuta} and Fintushel-Stern \cite{fs-pseudofree}, and later refined by the first author and Kirk \cite{hedden-kirk,hedden-kirk-2} for the purpose of studying such questions. Indeed, our result should be viewed as a vast generalization of the main theorem of \cite{hedden-kirk}, which showed that the Whitehead doubling operator has infinite rank, and of \cite{pinzon}, which showed that some generalizations of the Whitehead doubling operator also have infinite rank. The technique  is inherently smooth; indeed, as remarked above, any unknotted pattern will be zero on topological concordance. It is also worth emphasizing that it seems very difficult, if not impossible, to prove a result of this form using most other known smooth concordance invariants. For instance, $\Upsilon$ cannot prove that an infinite set of knots whose genera are bounded (like the image of any of our patterns) are independent in concordance \cite[Theorem 9.2]{LivingstonUpsilon}; it seems similarly unlikely that any invariants derived from the stable equivalence class of the knot Floer homology complex can prove such a result \cite{Hom2}. While the correction terms of branched covers \cite{ManolescuOwens,Jabuka} or knot Floer homology of the branch loci therein \cite{GRS} should contain a great deal of concordance information, they are quite challenging to compute. Similarly, the various concordance invariants coming from Khovanov homology and its generalizations \cite{Rasmussen,Lobb,LewarkLobb}  are extremely difficult to compute for families and are not expected to behave predictably under satellites.\\

We use the instanton cobordism obstruction to show that, given a pattern satisfying the rational linking number hypothesis, an infinite collection of torus knots can be chosen such that their images under $P$ are $\Z$--linearly independent.   While we use an independent set of knots (a subset of torus knots) to prove our theorem, we suspect that this was unnecessary. In fact, we make a rather bold strengthening of our conjecture:

\begin{conj} For any non-constant winding number zero operator $P$, there exists a knot $K$ for which the set $\{P(nK)\}_{n\in\Z}$ has infinite rank.
\end{conj}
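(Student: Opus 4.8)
The plan is to adapt the instanton cobordism obstruction used in the proof of \Cref{main} so that it detects independence within the single family $\{P(nK)\}_{n\in\Z}$, for a suitably chosen torus knot $K = T_{p,q}$. The strategy for \Cref{main} produces, for each knot $K$, a negative-definite cobordism from $\Sigma(P(K))$ to a 3--manifold built from the branched cover $\Sigma(P(U))$ and the knot $J$ framed as in the hypothesis; the rational linking number condition guarantees that a Chern--Simons/$\rho$--invariant computation forces a nontrivial lower bound on the minimal ``instanton charge'' needed to fill in. To get independence of $\{P(nK)\}$, one first observes that for fixed $K$, the branched double cover $\Sigma(nK) = \Sigma(T_{p,q})^{\#(?)}$ is \emph{not} quite a connected sum, but $P(nK) = P(\underbrace{K\#\cdots\#K}_{n})$ and the branched cover of a connected sum is a connected sum of branched covers, so $\Sigma(P(nK))$ bounds a cobordism assembled from $n$ copies of a fixed building block $W_K$ (the cobordism coming from the Seifert-fibered $\Sigma(T_{p,q})$) glued along the fixed ``satellite'' piece coming from $P$. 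This is the same input one feeds the machine in \Cref{main}, but now with the knots indexed by $n$ rather than varying over a family of distinct torus knots.

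The key steps, in order: (1) Fix a torus knot $K=T_{p,q}$ for which $\Sigma(T_{p,q})$ is a Seifert-fibered rational homology sphere whose moduli space of flat $SO(3)$ connections and associated Chern--Simons invariants are explicitly known (Fintushel--Stern, and the computations used in \cite{hedden-kirk,pinzon}); choose $p,q$ large enough that the relevant Chern--Simons values are small and densely distributed in a controlled way. (2) Write $\Sigma(P(nK))$ as the boundary of a 4--manifold $X_n$ formed by $n$ copies of the mapping-cylinder-type cobordism for $\Sigma(T_{p,q})$ and one copy of the ``pattern cobordism'' $V_P$ whose existence and (negative) definiteness follow exactly as in the proof of \Cref{main}, using the winding-number-zero hypothesis. (3) Suppose a nontrivial linear combination $\sum_{i} a_i P(n_i K)$ were slice; this yields a negative-definite 4--manifold $Z$ with boundary $\bigsqcup_i \Sigma(P(n_i K))$ (with orientations/multiplicities), and capping with the $X_{n_i}$ produces a closed (or cylindrical-end) negative-definite orbifold-type 4--manifold. (4) Apply the instanton obstruction --- i.e., analyze the moduli space of anti-self-dual connections on an adapted bundle, using the additivity of the Chern--Simons invariant under the connect-sum/stacking decomposition and a dimension count --- to derive a contradiction, precisely as in \cite{hedden-kirk} but now tracking how the Chern--Simons ``budget'' scales linearly in the $n_i$. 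The arithmetic here is cleaner than in \Cref{main}'s proof because there is only one torus knot $K$ in play: the relevant inequality becomes a statement about which integer combinations $\sum a_i n_i$ are achievable, and choosing $p,q$ (and if necessary passing to a subsequence of the $n$'s) makes this fail for every nontrivial $(a_i)$.

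The main obstacle is step (4): ensuring that the instanton moduli space over the capped-off manifold $Z \cup \bigcup X_{n_i}$ actually exists with the expected dimension and that the Chern--Simons comparison genuinely obstructs, rather than merely failing to provide an obstruction. In the proof of \Cref{main} this is arranged by choosing the family of torus knots so that the flat connection data on the ends is ``generic'' and the path of connections interpolating between them accrues the necessary action; with only a single knot $K$, the corresponding flat connections on the $\Sigma(P(n_i K))$ live in a more constrained configuration and one must verify that the Chern--Simons invariants $cs(\alpha_{n})$ attached to the relevant reducibles/irreducibles on $\Sigma(P(nK))$ form a set with the right Diophantine independence --- e.g.\ that $\{ cs(\alpha_n) \bmod \Z \}$ together with the contribution from the $\rho$--invariant of $(J, \text{framing})$ in $\Sigma(P(U))$ cannot all be simultaneously ``cancelled'' by a single negative-definite filling. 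I expect this can be forced by a careful choice of $T_{p,q}$, leaning on the explicit Neumann--Wahl / Fintushel--Stern formulas for Chern--Simons invariants on Seifert spaces, but it is the step where the argument could genuinely require the independent-torus-knot trick of \Cref{main} instead --- which is precisely why we state this only as a conjecture.
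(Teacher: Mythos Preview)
The statement you are attempting is \Cref{infiniterank}'s strengthening, stated in the paper explicitly as a \emph{conjecture}; the paper offers no proof of it whatsoever. There is therefore nothing to compare your proposal against. The authors call it ``rather bold'' and immediately follow it with remarks indicating that even the special case of Whitehead doubles of $nT_{2,3}$ is open. Your closing sentence shows you already understand this, so what you have written is a strategy sketch rather than a proof, and should be read as such.

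That said, the sketch has two concrete gaps worth naming. First, the negative-definite cobordism of \Cref{cob-torus} requires the rational linking number hypothesis $lk_\Sigma(J,J')\neq 0$, not merely that $P$ be non-constant and winding number zero; your step~(2) invokes that cobordism ``using the winding-number-zero hypothesis,'' which is not enough. The conjecture as stated covers patterns for which $lk_\Sigma(J,J')=0$ (e.g.\ the iterated doubles in \Cref{iterated}), and for those your pattern cobordism $V_P$ need not exist with the required definiteness.

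Second, and more seriously, the instanton obstruction of \Cref{cobound} is stated for $\Sigma_N = S^3_{q/p}(T_{r,s})$, a Brieskorn sphere, because the argument hinges on the explicit reducible connection on the Seifert-fibered mapping cylinder $W$ and the Neumann--Zagier dimension formula. For a fixed torus knot $K$ and companion $nK = K^{\#n}$, \Cref{cob-torus} produces a cobordism to $S^3_{q/p}(nK)\#Y$, but $S^3_{q/p}(nK)$ is \emph{not} Seifert fibered for $n>1$, and there is no analogue of $W$ or of the controlled energy $\rho(r,s)=q/rs(prs-q)$ available. Your suggestion to assemble $X_n$ from ``$n$ copies of the mapping-cylinder-type cobordism for $\Sigma(T_{p,q})$'' conflates the branched cover of $nK$ (which is a connected sum) with surgery on $nK$ (which is not); the instanton input lives on the surgery side. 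The recursive mechanism in the proof of \Cref{main} works precisely because one can make $\rho(r_N,s_N)$ arbitrarily small by varying the torus knot, and this freedom evaporates once $K$ is fixed. You correctly flag this in your final paragraph, but it is not a technicality to be ``forced by a careful choice of $T_{p,q}$''; it is the entire difficulty, and is why the authors left the statement as a conjecture.
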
  

Thus we expect satellite operators to expand the concordance group, in the sense that the image of a finite rank subgroup will often have infinite rank. This again drives home  the expectation that they are not homomorphisms. It would be very interesting to verify that Whitehead doubles of the $n$--fold connected sums of the trefoil knot are independent in concordance when $n>0$, or to find {\em any} knot for which the Whitehead double of both $K$ and $-K$ are non-zero in concordance. \\

Finally, we remark that in $3$--dimensional topology, JSJ theory applied to knot complements tells us that the decomposition of a knot as an iterated satellite is quite rigid.  It is reasonable to ask whether there are  $4$--dimensional remnants of this rigidity.  As a sample, given a winding number zero satellite operator $P$, we can define the {\em $P$--filtration} of  the concordance group  to be the descending filtration  whose $i$--th term is the subgroup generated by $i$--fold iterated satellites with pattern $P$:
\[  ...\subseteq \langle P^2(\mathcal{C})\rangle \subseteq \langle P(\mathcal{C})\rangle \subset \mathcal{C}.  \]
                  
 \begin{question}\label{it-filt} If $P$ is a non-constant winding zero operator, does each associated graded group of the $P$--filtration have infinite rank?
 \end{question}
 
Since the genera of knots in the image of $P$ are bounded, Shida Wang's results imply the first quotient $\mathcal{C}/\langle P(\mathcal{C})\rangle$ is of infinite rank \cite{Wang}. Again, it would be interesting to understand even the seemingly simpler question of whether iterations of a pattern $P$ on subsets of $\mathcal{C}$ are independent. In the case of iterated Whitehead doubles, Kyungbae Park \cite{park} has shown that the first two iterated Whitehead doubles of the trefoil are independent. For patterns with non-zero winding number, Wenzhao Chen gave a criterion for iterates of an operator to be independent and has shown that infinitely many iterated satellites with the Mazur pattern are independent \cite{ChenThesis}.  As a final question, we have
 
 \begin{question} If $P$ and $Q$ are winding number zero satellite operators inducing isomorphic filtrations of $\mathcal{C}$, are $P$ and $Q$ concordant as knots in the solid torus?
 \end{question}

\textbf{Outline:} The next section verifies \Cref{infiniterank} for satellites with non-zero winding number, then turns to topological constructions essential for the proof of  our main result.  \Cref{sec:instanton} offers an overview of the instanton cobordism obstruction derived from instanton moduli spaces and Chern-Simons invariants. \Cref{sec:proof} uses this obstruction in conjunction with the topological constructions from \Cref{sec:topology} to prove  the main result. Finally, \Cref{sec:examples}  contains some examples that illustrate the way our result can be applied in practice.  In particular, we show how to check whether an operator satisfies the hypothesis of \Cref{main} and how to use this method to easily produce examples of topologically slice operators with infinite rank on smooth concordance. \\

\textbf{Acknowledgements:} The  authors owe a debt of gratitude to Paul Kirk, with whom the first author learned and explored the ideas and questions central to the paper, and whose mentorship and guidance showed the second author the way forward from her time as a graduate student.  The second author is eternally grateful to Tye Lidman for many very fruitful conversations, and to him, Allison Miller, and Chuck Livingston for many helpful comments on an early draft.  Matthew Hedden was partially supported by NSF CAREER grant DMS-1150872, NSF grant DMS-1709016 and an Alfred P. Sloan Research Fellowship. Juanita Pinz{\'o}n-Caicedo was partially supported by NSF grant DMS-1664567, she is also grateful to the Max Planck Institute for Mathematics in Bonn for its hospitality and financial support while a portion of this work was prepared for publication.

\section{Topological Preliminaries and Key Constructions}\label{sec:topology}
 
In this section we introduce notation and then verify  \Cref{infiniterank} in the case of satellite operators with non-zero winding number.  The proof breaks down completely when the winding number is zero, but it inspires the rough idea that one should look for invariants of winding number zero satellite knots that remember the companion and are robust enough to distinguish infinitely generated subgroups of the topologically slice subgroup of the smooth concordance group.   For us, these will be gauge theoretic properties and invariants of the 2-fold branched covers which behave well with respect to definite cobordisms.  \\

To begin, we provide a more precise definition of a satellite. 

\begin{definition} \label{def:satellite}
Let $P\subset S^1\times D^2$ be an oriented knot in the solid torus. Consider an orientation-preserving embedding $h:S^1\times D^2\to S^3$ whose image is a tubular neighborhood of a knot $K$ so that  $S^1\times \{*\in \partial D^2\}$ is mapped to the canonical longitude of $K$. The knot $h(P)$ is called the {\em satellite knot with pattern $P$} and {\em companion $K$}, and is  denoted $P(K)$. The {\em winding number} of the satellite is defined to be the algebraic intersection number of $P$ with  $\{*\}\times D^2$.  \end{definition}
 
We now verify \Cref{infiniterank} for patterns with  non-zero winding number.  

\begin{prop}\label{non-zero} Let $P\subset S^1\times D^2$ be a pattern for a satellite operator with non-zero winding number, $w$. Then $P:\mathcal{C}\to\mathcal{C}$ has infinite rank.
\end{prop}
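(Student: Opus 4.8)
The plan is to exploit a classical additivity property of the Levine--Tristram signature under the satellite construction. For a knot $J$ write $\sigma_J\colon S^1\to\Z$ for its Levine--Tristram signature function; this is a step function with finitely many jumps, each occurring at a unit-circle zero of $\Delta_J$, and it vanishes identically off the zero set of $\Delta_J$ whenever $J$ is slice (off that finite set the associated Hermitian form is nonsingular and contains a half-dimensional isotropic subspace, hence has signature zero). The engine of the proof is Litherland's formula: for a pattern $P\subset V$ of winding number $w$ and any companion $K$,
\[\sigma_{P(K)}(\omega)=\sigma_{P(U)}(\omega)+\sigma_K(\omega^{w})\qquad(\omega\in S^1).\]
Since $\sigma_K(\bar\omega)=\sigma_K(\omega)$ we may replace $w$ by $|w|$ and assume $w>0$.

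To establish infinite rank it suffices to exhibit a single infinite family of companions $\{K_n\}$ for which $\{[P(K_n)]\}$ is $\Z$--linearly independent in $\mathcal C$, and I will take the $K_n$ to be torus knots $T_{2,p}$ with $p$ a large prime. Recall that $\sigma_{T_{2,p}}$ is a step function with exactly $p-1$ jumps, each of absolute value $2$, located at certain $2p$--th roots of unity (and \emph{not} at $-1$). Consequently $\sigma_{T_{2,p}}(\omega^{w})$ is a step function with exactly $w(p-1)$ jumps, each of absolute value $2$, located at $2pw$--th roots of unity of the form $e^{2\pi i\ell/2pw}$ with $\ell$ odd and $p\nmid\ell$. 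The key elementary observation is that for distinct primes $p\ne q$ the jump sets of $\sigma_{T_{2,p}}(\omega^{w})$ and $\sigma_{T_{2,q}}(\omega^{w})$ are \emph{disjoint}: a common point $e^{2\pi i\ell/2pw}=e^{2\pi i m/2qw}$ forces $q\ell\equiv pm\pmod{2pqw}$, hence $p\mid q\ell$, hence $p\mid\ell$, contradicting $p\nmid\ell$.

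With this in hand: let $\Theta\subset S^1$ be the finite jump set of the \emph{fixed} function $\sigma_{P(U)}$, and fix a bound $B$ with $w(B-1)>\#\Theta$. I claim $\{[P(T_{2,p})]\}$, as $p$ ranges over primes $>B$, is linearly independent. If not, there are a finite set $\mathcal P$ of such primes and integers $c_p$, not all zero, with $\sum_{p\in\mathcal P}c_p\,[P(T_{2,p})]=0$ in $\mathcal C$. Then a suitable connected sum of copies of the $P(T_{2,p})$ and their concordance inverses is slice, so additivity of signatures under connected sum together with the vanishing statement above gives $\sum_{p\in\mathcal P}c_p\,\sigma_{P(T_{2,p})}(\omega)=0$ for $\omega$ outside a finite set; substituting Litherland's formula, the step function
\[F(\omega):=\Bigl(\textstyle\sum_{p\in\mathcal P}c_p\Bigr)\sigma_{P(U)}(\omega)+\sum_{p\in\mathcal P}c_p\,\sigma_{T_{2,p}}(\omega^{w})\]
vanishes off a finite set. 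Being a finite sum of integer step functions, $F$ is locally constant off a finite jump locus, so it vanishes off that locus and therefore has all jumps equal to $0$. Now fix $p\in\mathcal P$. Since $\sigma_{T_{2,p}}(\omega^{w})$ has $w(p-1)>\#\Theta$ jump points, at least one of them, say $\omega_\ast$, lies outside $\Theta$; by the disjointness in the previous paragraph $\omega_\ast$ also lies outside the jump set of every $\sigma_{T_{2,q}}(\omega^{w})$ with $q\in\mathcal P\setminus\{p\}$. Hence the jump of $F$ at $\omega_\ast$ equals $c_p$ times the (nonzero, $\pm2$) jump of $\sigma_{T_{2,p}}(\omega^{w})$ there, forcing $c_p=0$. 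As $p\in\mathcal P$ was arbitrary all $c_p$ vanish, a contradiction; so $P$ has infinite rank.

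The step I expect to demand the most care is reconciling the failure of $P$ to be a homomorphism with the need for a clean linear-independence argument: a relation among the $P(K_i)$ does not produce the tidy identity $\sum_i c_i\,\sigma_{K_i}(\omega^{w})\equiv0$ but the perturbed $F\equiv0$ above, contaminated by the fixed summand $\sigma_{P(U)}$ and the parasitic coefficient $\sum_i c_i$. Choosing companions whose signature jumps sit at tightly controlled roots of unity, and retaining only those of large prime parameter, is exactly what makes the $w(p-1)$ jumps of $\sigma_{T_{2,p}}(\omega^{w})$ both pairwise disjoint across $p$ and numerous enough to dodge the fixed finite set $\Theta$, which is what licenses reading off each $c_p$ individually. (One could alternatively run the same argument inside Levine's algebraic concordance group, on which a winding-number-$w$ satellite acts by an affine map, but the signature formulation above is self-contained.)
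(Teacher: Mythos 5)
Your proof is correct and follows essentially the same route as the paper's: Litherland's satellite formula for the Levine--Tristram signature, applied to torus knot companions, with linear independence read off from the jump function. The only difference is one of packaging --- you verify the needed disjointness of jump loci explicitly for the subfamily $T_{2,p}$ with $p$ prime, whereas the paper cites Litherland's independence result for all torus knots and the finite-to-one nature of $\zeta\mapsto\zeta^w$; your version is a bit more self-contained but not a different argument.
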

\begin{proof}  

We appeal to the jump function of the Tristram-Levine signature, whose definition and basic properties we quickly review.  Given  a knot $K$, let $V_K$ be a Seifert matrix and consider the associated matrix $$A(t)=(1-t)V_K + (1-t^{-1})V_K^{T}=(1-t)(V_K-t^{-1}V_K^{T}).$$ If $\zeta=  e^{{2\pi i}\theta} \in S^1$ is a unit complex number, then $A(\zeta)$ is a Hermitian matrix. For those $\zeta\in S^1$ for which $A(\zeta)$ is non-singular, we let $\sigma_K(\zeta)$ denote the signature of $A(\zeta)$, and extend this definition to all of $S^1$ by taking the average of the one-sided limits. Since $\det(A(t))=(1-t)^{2g}\Delta_K(t^{-1})$, where $\Delta_K(t)$ is the Alexander polynomial of $K$, the function $\sigma_K:S^1\to\Z$ is continuous except at the unit roots of $\Delta_K(t)$, and is therefore a step function with finitely many jumps. These discontinuities are recorded by the {\em signature jump function}, defined as 
\[\delta_K(\zeta) =\frac{1}{2}\left( \lim_{s \to \zeta^+}\sigma_K(s) -  \lim_{s \to \zeta^-}\sigma_K(s)\right).\]

We now observe that, while the signature function  is not a concordance invariant, if $K_1$ and $K_2$ are concordant, then $\sigma_{K_1}(\zeta)=\sigma_{K_2}(\zeta)$ for those $\zeta\in S^1$  that are not a root of $\Delta_{K_1}$ or $\Delta_{K_2}$.  This is an immediate consequence of the fact that the Seifert form of a slice knot, in this case $K_1\#m({K_2}^r)$, is metabolic (here $m(K)$ denotes the mirror image).  It  follows that the jump function {\em is} a concordance invariant:  $\delta_{K_1}(\zeta)=\delta_{K_2}(\zeta)$ for all $\zeta\in S^1$ if $K_1\sim K_2$. Moreover, the jump function provides a homomorphism from the (topological) concordance group to $\Z$. More details can be found in \cite[Section 13]{gordon-survey}. Finally, notice that the jump function $\delta_K$ is zero except at the roots of $\Delta_K$ and  therefore the support of the jump functions of a finite set of knots is finite. Thus, to show that $P$ has infinite rank it is enough to show that there exists a family of knots $\{K_i\}_{i\in \N}$ for which the set $\{\zeta\in S^1 \mid \delta_{P(K_i)}(\zeta)\ne 0\}$ is infinite. \\

Litherland proved a  formula for the signature function of a satellite knot \cite{litherland}. Expressed in terms of the jump function, this formula reads as
\begin{equation}\label{satellitejump}\delta_{P(K)}(\zeta)= \delta_{P(U)}(\zeta)+\delta_K(\zeta^w),\end{equation}
where $w$ is the winding number. Litherland also showed that the set $\left\{\delta_{T_{p,q}}\right\}$ of jump functions of torus knots is independent in the additive group of functions on the circle \cite{litherland}. In particular, the set $\left\{\zeta\in S^1 \mid \delta_{T_{p,q}}(\zeta)\ne 0 \text{ for some } T_{p,q}\right\}$ contains  infinitely many distinct elements. Since the function $\zeta\to \zeta^w$ is finite to one, the set $\left\{\zeta\in S^1 \mid \delta_{T_{p,q}}(\zeta^w)\ne 0 \text{ for some } T_{p,q}\right\}$ is also infinite.  It then follows from  \eqref{satellitejump} that the support of jump functions of satellites of torus knots with pattern $P$ is infinite as well. Thus, the operator defined by $P$ has infinite rank.

\end{proof}

The proof of the above proposition clearly breaks down in the winding number zero case, as the signature function of the satellite forgets the companion knot entirely.  This is not simply a deficit of the signature function.  Indeed, in the case that $P$ is unknotted, or even merely an Alexander polynomial one knot,  the topological concordance class of $P(K)$ is trivial for {\em any} $K$ by work of Freedman \cite{Freedman}. Thus, far subtler techniques are required.  For the remainder of this section, we pave our way to apply an instanton cobordism obstruction to the problem at hand by constructing some explicit cobordisms.  \\

Recall, then, that  closed oriented 3--manifolds $Y_0$ and $Y_1$ are {\em oriented cobordant} if there exists a compact oriented 4--manifold $W$ with oriented boundary $\partial W= -Y_0\sqcup Y_1$, where we follow the ``outward normal first''  convention for orientations induced on the boundary. The manifold $W$ is called a {\em cobordism from $Y_0$ to $Y_1$}, and $Y_0$ (resp. $Y_1$) will be referred to as  the ``incoming'' (resp. ``outgoing") boundary component.  We will construct cobordisms whose incoming boundary component is the branched double cover of a satellite knot, and  whose outgoing boundary  contains a  manifold obtained by Dehn surgery on the companion as a summand. These cobordisms allow us to isolate the companion knot $K$ from $P(K)$, and thereby utilize desirable gauge theoretic properties of  surgeries on the former to obstruct sliceness of the latter in the spirit of the signature argument.  \\

The $2$--fold cover of a 3-manifold  branched over a knot $K$ will be denoted by $\Sigma\left(K\right)$, as the 3-manifold should be obvious from the context. In the case of a satellite knot $P(K)$ in the $3$-sphere, there is a decomposition of $\Sigma\left(P(K)\right)$ as the union of the 2-fold cover of $S^1\times D^2$ branched over $P$ and a 2-fold covering space of the knot complement $S^3\setminus N(K)$; see, for example \cite{seifert} or \cite{livingston-melvin}. The isomorphism type of this latter covering space depends only on the parity of the winding number.  In the case that the winding number is even, the cover is the disjoint union of two copies of $S^3\setminus N(K)$.  We make this decomposition precise with the following proposition. An image depicting the decomposition can be found in \Cref{twist}.

\begin{prop}\label{decomposition} Let $P\subset S^1\times D^2$ be a pattern knot with even winding number, and let $\Sigma(P)$ denote the 2-fold cover of the solid torus, branched over $P$.  Then there is a decomposition of the 2-fold branched cover of the satellite $\Sigma(P(K))$:
\begin{equation}\label{eq:decomposition} \Sigma\left(P(K)\right)\cong \Sigma(P) \underset{\widetilde{h}}{\cup} 2\left(S^3\setminus N(K)\right),\end{equation} where the gluing takes place along $\partial \Sigma(P)=T_1\sqcup T_2$, $T_i=\partial\left(S^3\setminus N(K)\right)_i$. Moreover, the gluing map $\widetilde{h}$ identifies the lift of $(*\times \partial D^2, S^1\times *)$ in $T_i$ with the pair $(\mu_K,\lambda_K)$ in the corresponding copy of $S^3\setminus N(K)$.
\end{prop}

\begin{proof} The branched double cover of  $S^1\times D^2$ branched over $P$ is, away from $P$, simply the 2-fold cover defined by the homomorphism \[H_1\left(S^1\times D^2\setminus P\right)\cong \Z\oplus \Z\rightarrow \Z\rightarrow \Z/2\] that measures the parity of the projection onto the summand generated by the meridian of $P$.   As the pattern knot has even winding number, the restriction of this homomorphism to the subgroup of $\pi_1\left(S^1\times D^2\setminus P\right)$ generated by the boundary torus is trivial.  Thus, the restriction of the branched covering to the boundary of the solid torus is the trivial 2-fold cover, consisting of the disjoint union of two 2-tori $T_1$ and $T_2$, interchanged by the covering involution.  It follows that we can extend the branched covering involution on $\Sigma(P)$ to an involution on the manifold $\Sigma(P) {\cup} 2\left(S^3\setminus N(K)\right)$, where  the boundary tori of $\Sigma(P)$ and the knot complements are identified as stated.  The quotient of this latter space under the involution is the $3$-sphere, obtained from the solid torus and the knot complement  by identifying their boundaries with a diffeomorphism $h$ satisfying $h(*\times \partial D^2)=\mu_K$ and $h(S^1\times *)=\lambda_K$.  The image of the branch locus $P\subset S^1\times D^2$  under this identification is the satellite $P(K)$, hence the manifold described by the right side of  \Cref{eq:decomposition} is homeomorphic to the 2-fold branched covering of the 3-sphere branched over $P(K)$, as claimed.
\end{proof}

Next, we briefly recall the definition of the {\em rational linking number} of a pair of disjoint oriented closed curves in a rational homology sphere, $\Sigma$ (see Seifert and Threlfall \cite[Section 77, pg 288]{ST} for a good reference).  Suppose $\gamma,\eta$ are two such curves.  Alexander duality over $\Q$ implies $H_1(\Sigma\setminus \gamma)\cong \Q$, and under this isomorphism $[\eta]\in H_1(\Sigma\setminus\gamma)$ can be expressed as a  multiple of the class of the meridian to $\gamma$.   The rational linking number  $lk_\Sigma(\gamma,\eta)$ is defined to be this multiple:  \[[\eta]= lk_\Sigma(\gamma,\eta) \cdot [\mu_\gamma] \]  

A more geometric perspective is provided by intersection numbers with {\em rational Seifert surfaces}. A rational Seifert surface for an oriented curve $\gamma\subset \Sigma$ is a smoothly embedded surface $F_\gamma \subset \Sigma\setminus N(\gamma)$  representing a generator of $H_2(\Sigma\setminus N(\gamma),\partial )\cong\Z$,   whose boundary is a positive multiple of $\gamma$ in homology: $[\partial F_\gamma]=d\cdot [\gamma]\in H_1(N(\gamma))$, with $d>0$.  The rational linking number  of $\gamma$ and $\eta$ can alternatively be characterized as follows:

\[ lk_\Sigma(\gamma,\eta):=  \frac{1}{d}( F_\gamma \cdot \eta) \in \Q,\] 
 where $\cdot$ denotes algebraic intersection number.  The rational linking number is symmetric in $\gamma$ and $\eta$, and can be viewed as a simultaneous generalization of the ordinary linking number between null-homologous curves and the linking pairing on first homology.  Indeed, the value of  $lk_\Sigma(\gamma,\eta)$ modulo $\Z$ depends only on the homology classes  $[\gamma],[\eta]$ and equals the  $\Q/\Z$--valued linking pairing on $H_1(\Sigma)$.  Like ordinary linking numbers, the rational linking number is an invariant of the concordance class of the link $\gamma\sqcup\eta$.  Finally, we observe that the linking number assigns a well-defined rational number to a {\em framed} curve, defined to be the linking number between a curve and its push-off using the given framing. \\

A framed curve $\eta$ in a 3--manifold $\Sigma$ also determines a smooth 4--manifold $W_\eta$ given by 2--handle attachment along $\eta$.  The incoming and outgoing boundary components of $W_\eta$ are $\Sigma$ and $\Sigma_\eta$, respectively, where $\Sigma_\eta$ is the 3--manifold obtained by Dehn surgery along $\eta$ using its given framing. The following lemma describes the constraints on the topology of the 2-handle cobordism imposed by the rational linking number. Homology groups will be assumed to have coefficients in $\Z$.

\begin{lemma} \label{lemma:intersection} Let $\Sigma$ be a rational homology sphere, and $\eta\subset \Sigma$ a framed curve whose homology class $[\eta]\in H_1(\Sigma)$ has order $d>0$.  Then the 2-handle cobordism $W_\eta$ has $H_2(W_\eta)\cong \Z$, and the self-intersection of any surface representing a generator is equal to $d^2\cdot lk_\Sigma(\eta,\eta')$, where $\eta'$ is a framed push-off.   The orders of first homology of the boundaries are related by the linking number, provided it is non-zero:
\[ |H_1(\Sigma_\eta)| = |lk_\Sigma(\eta,\eta')| \cdot |H_1(\Sigma)|\]
\end{lemma}
Note, in particular, that the sign of the self-intersection of any class in $H_2(W_\eta)$  agrees with the sign of the framing, viewed as an element of $\Q$ via the rational linking number.  Also observe that the special case of null-homologous knots $d=1$ is well-known.

\begin{proof}  Lefschetz duality gives rise to a unimodular pairing:
\[   H_2(W_\eta,\Sigma)\otimes H_2(W_\eta,\Sigma_\eta)\rightarrow  \Z,\]
which one can calculate geometrically via intersections between properly embedded surfaces.  The relevant groups, $H_2(W_\eta,\Sigma)\cong\Z$ and $H_2(W_\eta,\Sigma_\eta)\cong\Z$, are generated by the classes of the core $D^2\times *$  and cocore $*\times D^2$ of the 2-handle $D^2\times D^2$, respectively, which we denote $[C]$ and $[C^*]$. They are oriented in the standard way, so that they intersect once, positively. \\

As $\Sigma$ is a rational homology sphere, the long exact sequence of the pair $(W_\eta,\Sigma)$ shows that $H_2(W_\eta)\cong \Z$. There is an intersection pairing on this latter group, and it remains to compute the self-pairing of a generator.  To construct a generator, consider a rational Seifert surface $F_\eta$ for $\eta$.  By definition, $\partial F_\eta$ is an embedded multi-curve in $\partial N(\eta)$, and is  homologous to $d\cdot [\eta'] - k\cdot[\mu]$, where $\eta'$ is the framing of $\eta$, $\mu$ is the meridian, and $d,k\in \Z$, with $d>0$. Let $G$ be a singular 2-chain in $\partial N(\eta)$  realizing a homology between $\partial F_\eta$ and $d$ parallel copies of $\eta'$ union $k$ oppositely oriented copies of $\mu$.  We can construct a closed $2$-chain $\widehat{F_\eta}$ representing a generator of $H_2(W_\eta)$ by considering the union of $F_\eta$, $G$,  $d$ parallel copies of the core of the 2-handle, and $k$ copies of the cocore:   $\widehat{F}_\eta=F_\eta \cup G \cup dC\cup kC^*$.  \\    

The  pairings above are natural with respect to the maps on homology induced by the inclusions $W_\eta \subset (W_\eta,\Sigma)$ and $W_\eta \subset (W_\eta,\Sigma_\eta)$, in the sense that there is a commutative triangle
\[
\xymatrix{
H_2(W_\eta)\otimes H_2(W_\eta) \ar[d] \ar[dr]& \\
       H_2(W_\eta,\Sigma)\otimes H_2(W_\eta,\Sigma_\eta) \ar[r] &  \Z ,
}\]
where the diagonal and  horizontal maps are the  pairings on absolute and relative homology, respectively.  Now $[\widehat{F_\eta}]$ maps to  $d[C]$ and $k[C^*]$ in $H_2(W_\eta,\Sigma)$ and $H_2(W_\eta,\Sigma_\eta)$, respectively, under the inclusions. Indeed, this holds on the chain level since $F_\eta\cup G\cup kC^*$ and $F_\eta\cup G\cup dC$ are represented by  chains in $C_2(\Sigma)$ and $C_2(\Sigma_\eta)$, respectively. Naturality of the pairings  then implies  \[[\widehat{F_\eta}]\cdot[ \widehat{F_\eta}]=d[C]\cdot k[C^*]=dk.\]   On the other hand, the framing of $\eta$ is identified with the rational number \[ lk_{\Sigma}(\eta,\eta')=\frac{1}{d} (F_\eta\cdot \eta')=\frac{k}{d},\]  where the last equality follows from the fact that $[\partial F_\eta]=d\cdot [\eta'] - k\cdot[\mu]$.  Thus the self pairing of $[\widehat{F}_\eta]$ is equal to $d^2 \cdot lk_{\Sigma}(\eta,\eta')$, as claimed.\\

We analyze $H_1(\Sigma_\eta)$  using the exact sequence of the pair $(W_\eta,\Sigma_\eta)$:
\[
\xymatrix{
0 \ar[r] & H_2(\Sigma_\eta) \ar[r] &  H_2(W_\eta)\ar[r] \ar[d]^{\rotatebox[origin=c]{270}{$\cong$}} &  H_2(W_\eta,\Sigma_\eta)\ar[r] \ar[d]^{\rotatebox[origin=c]{270}{$\cong$}} &  H_1(\Sigma_\eta)\ar[r] &  H_1(W_\eta)\ar[r] &  0\\
& & \Z \ar[r]^{\times k}& \Z & & & 
}
\]

where $k=d\cdot lk_{\Sigma}(\eta,\eta')$ as above, and we use that $H_3(W_\eta,\Sigma_\eta)=H_1(W_\eta,\Sigma_\eta)=0$ (which follows from the fact that $W_\eta$ can also be viewed as $2$-handle cobordism from $\Sigma_\eta$ to $\Sigma$). If $k\ne 0$,  the sequence reduces to
\[
\xymatrix{
0 \ar[r] &\Z \ar[r]^{\times k} & \Z \ar[r] & H_1(\Sigma_\eta) \ar[r] & H_1(W) \ar[r]&  0,
}
\]
which leads, in turn, to an exact sequence
\[
\xymatrix{
0 \ar[r] &\Z/k \ar[r] & H_1(\Sigma_\eta) \ar[r] & H_1(W) \ar[r]&  0.
}
\]
But $H_1(W)$ is the quotient of $H_1(\Sigma)$  by the subgroup generated by $[\eta]$, a subgroup of order $d$. It follows that  \[ |H_1(\Sigma_\eta)|= |k|\cdot \frac{|H_1(\Sigma)|}{d} = |lk_{\Sigma}(\eta,\eta')| \cdot |H_1(\Sigma)|.\]
\end{proof}
 
The following proposition provides the cobordisms requisite for our strategy. We remind the reader that for any choice of companion $K$, the branched cover $\Sigma(P(K))$ contains a copy of $\Sigma(P)$, the 2-fold cover of the solid torus $S^1\times D^2$ branched over the pattern $P$. Each lift of $\partial D^2$ is contained in $\partial\Sigma(P)$ where it is identified with $\mu_K\subset \partial \left(S^3\setminus N(K)\right)$  as in \Cref{decomposition}.  The curve $\partial D^2$ acquires the (canonical) Seifert framing from the disk that it bounds, and this framing is equivalent (up to isotopy of sections of the unit normal bundle) to the framing which $\partial D^2$ receives by virtue of being embedded in the oriented surface $\partial (S^1\times D^2)$.  The lifting criterion for covering spaces allows us to lift the framing to $\Sigma(P)$, where, as above, we conflate the framing with a curve in the neighborhood of  $\partial D^2$ via the tubular neighborhood theorem. It follows that the lifted framing of $\partial D^2$ is independent of $K$. In particular, $lk_{\Sigma(P(U))}(\mu_U,\mu'_U)=lk_{\Sigma(P(K))}(\mu_K,\mu'_K)$.

\begin{prop}\label{cobordism} If $P\subset S^1\times D^2$ is a pattern with winding number zero and $(p,q)$ is a pair of arbitrary relatively prime nonzero integers, then $\Sigma(P(K))$ is cobordant to a 3--manifold containing $S^3_{p/q}(K)$ as a  connected summand.  Moreover, if the rational linking number of (either) framed lift of $\partial D^2$ to $\Sigma(P(U))$ satisfies $lk_{\Sigma(P(U))}(\mu_U,\mu'_U)<-q/p$, then the cobordism can be taken to be negative definite.
\end{prop}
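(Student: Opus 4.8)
The plan is to realize the cobordism by attaching $2$-handles to $\Sigma(P(K))\times[0,1]$ along a framed link supported in one of the two knot-exterior pieces appearing in \Cref{decomposition}, and then to read its intersection form off the handle framings. First I would fix notation using \Cref{decomposition}: since the winding number is zero, hence even, $\Sigma(P(K))$ is obtained from $\Sigma(P(U))\setminus N(J_1\sqcup J_2)$ by gluing in two copies $E^{(1)},E^{(2)}$ of the knot exterior $S^3\setminus N(K)$; the covering involution interchanges $E^{(1)}$ and $E^{(2)}$, and the gluing carries $\mu_K^{(i)}$ to $J_i$ and $\lambda_K^{(i)}$ to $\mu_{J_i}$. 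In particular a tubular neighborhood of $J_i$ is identified with one of a meridian of $K$, and the $\partial D^2$-framing of $J_i$ is carried to the $0$-framing of that meridian viewed as an unknot in $S^3$. The main construction is to attach to $\Sigma(P(K))\times\{1\}$ a chain of $2$-handles supported near $E^{(2)}$, with framings encoding a negative continued-fraction expansion of $q/p$, so arranged that a slam-dunk and handle slide analysis of the surgered diagram exhibits the surviving exterior $E^{(1)}$, glued to a solid torus along the slope $q\mu_K+p\lambda_K$ --- that is, a copy of $S^3_{q/p}(K)$ --- as a prime summand of the (connected) outgoing boundary. The complementary summand $M$ is a closed $3$-manifold built from $\Sigma(P(U))$ and independent of the companion $K$; this is the point, since it isolates $K$ in the single factor $S^3_{q/p}(K)$. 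This gives the first assertion, for $p,q$ arbitrary coprime nonzero integers.

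For the second assertion I would compute $H_2$ of the resulting cobordism $W$. It is generated by the closed surfaces obtained by capping rational Seifert surfaces of the attaching circles with the $2$-handle cores, and its intersection form is the tridiagonal matrix of the handle framings: the continued-fraction entries may be taken $\le -2$, while the distinguished entry carrying the surgery slope equals, up to the overall sign fixing the orientation of $W$ as a cobordism, $lk_{\Sigma}(J_i,J_i)+p/q$. By the standard criterion for negative-definiteness of a chain of $2$-handles, $W$ is then negative definite exactly when $lk_{\Sigma}(J_i,J_i)+p/q<0$, that is, when $lk_{\Sigma}(J_i,J_i)<-p/q$.

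The step I expect to be the main obstacle is the linking-number bookkeeping behind the previous paragraph, which appears in two guises. First, one must verify that the distinguished self-intersection number really equals $lk_{\Sigma}(J_i,J_i)+p/q$: a rational $2$-chain bounding a multiple of $J_i$ inside $\Sigma(P(K))$ is forced to pass through $E^{(i)}$, and one has to check that this detour contributes nothing to the intersection count --- which holds because $\mu_K$ and a torus-parallel push-off cobound disjoint meridian disks of $N(K)$ --- so that the computation reduces to the corresponding one in $\Sigma(P(U))$, twisted only by the $p/q$ recorded in the surgery slope. Second, one must confirm that the $2$-handles are genuinely localized near $E^{(2)}$, so that $E^{(1)}$ survives in the outgoing boundary glued along slope $q/p$ and really does split off as the asserted summand. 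The remaining ingredients --- realizing a rational Dehn surgery by a chain of $2$-handles, the tridiagonal shape of such a chain, and the behavior of connected sums under surgery --- are routine Kirby calculus.
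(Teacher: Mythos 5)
Your high-level strategy --- attach $2$-handles to $[0,1]\times\Sigma(P(K))$ so that the outgoing boundary splits off the $q/p$-filling of a companion exterior, then read negative-definiteness off the self-linking of $J_i$ against the slope --- is the paper's strategy. But the construction you describe does not produce the claimed splitting, and the intersection-form computation is wrong in a way that matters. The paper attaches a \emph{single} $2$-handle along the $(p,q)$-curve $\eta$ on the torus $T=\partial N(J_1)=\partial E^{(1)}$, framed by its push-off \emph{along $T$}. This is Gordon's trick: the torus framing creates an essential sphere (the annulus $T\setminus N(\eta)$ capped with two parallel copies of the core disk), and that sphere is what separates the outgoing boundary into $S^3_{q/p}(K)\,\#\,Y$. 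Your chain of handles ``supported near $E^{(2)}$'' never touches $\partial E^{(1)}$, so $E^{(1)}$ remains glued to $\Sigma(P(U))\setminus N(J_1\sqcup J_2)$ exactly as in \Cref{decomposition} and cannot emerge re-glued to a solid torus along $q\mu_K+p\lambda_K$, nor is there any separating sphere in sight. The continued-fraction chain is a device for integralizing a rational coefficient in a Kirby diagram of $S^3$; here the handle is attached along an honest embedded curve in the $3$-manifold $\Sigma(P(K))$ with an honest integral framing, so no chain is needed and invoking one obscures the mechanism. Relatedly, your claim that the complementary summand is independent of $K$ is false at this stage: $Y$ is a filling of $\Sigma(P(K))\setminus E^{(1)}$ and still contains the second copy $E^{(2)}$ of the knot exterior. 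Independence of $K$ is only achieved in \Cref{cob-torus} after a further negative-definite cobordism that unknots that copy.

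The second genuine gap is the intersection form. With one handle, $H_2(W)\cong\Z$ and the form is the $1\times 1$ matrix given by the self-linking in $\Sigma(P(K))$ of $\eta$ with its torus push-off $\eta'$. Your proposed entry $lk_\Sigma(J_i,J_i)+p/q$ is not even an integer; the correct value is
\[ lk_{\Sigma(P(K))}(\eta,\eta') \;=\; pq+q^2\,lk_\Sigma(J_1,J_1)\;=\;q^2\bigl(lk_\Sigma(J_1,J_1)+p/q\bigr), \]
which has the same sign as your expression but must actually be computed. The paper does this with the Cha--Ko formula applied to a surgery presentation of $\Sigma(P(K))$ built from a Seifert-surface presentation of $\Sigma(P(U))$ together with $\pm1$-framed surgery descriptions of the two companion exteriors; the key point your sketch waves at (``the detour through $E^{(i)}$ contributes nothing'') is justified there by the fact that the companion surgery curves have linking number zero with everything on $T$, so the correction term collapses to $-q^2\,lk_{S^3}(J_1,L)(S+S^T)^{-1}lk_{S^3}(J_1,L)^T=q^2 l$. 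You should also note that no ``standard criterion for a chain'' is needed: with a rank-one form, negative-definiteness is literally the inequality $pq+q^2l<0$, i.e.\ $l<-p/q$.
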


\begin{proof} Referring to our decomposition of the cover $\Sigma(P(K))$ in \Cref{decomposition}, let $T=\partial \left(S^3\setminus N(K)\right)\subset \Sigma(P(K))$ denote the boundary torus of either copy of the knot complement found in $\Sigma(P(K))$.    The cobordism posited by the proposition is obtained by attaching a 4--dimensional 2-handle to a curve $\eta\subset T$, with framing $\eta'$ given by a parallel copy of $\eta$ in the torus.  There are two steps. First, we  show that the boundary of handle attachment along $\eta$ with torus framing $\eta'$  contains  Dehn surgery along $K$ as a connected summand.  This is a well-known result, but we include a proof for completeness. We then determine the sign of the intersection form for this 2-handle cobordism in terms of the rational linking number of the framed lift of $\partial D^2$ and the slope of $\eta$. \\

To  clarify the computations, endow $H_1(T)$ with a basis represented by a meridian-longitude pair $(\mu_K,\lambda_K)$ for the knot $K$, viewed as a subset of $S^3$.  Thus a slope $\eta\subset T$ for $p/q$ Dehn filling is given by a simple closed curve in $T$ representing the element  $p\cdot[\mu_K]+q\cdot[\lambda_K]\in H_1(T)$.   A tubular neighborhood of $\eta$ intersects the torus $T$ in a submanifold $A\subset T$ which is homeomorphic to an annulus $[0,1]\times S^1$. Let $\eta'$ be the framing of $\eta$ given by either boundary component of this annulus.   \\

Form a smooth 4--manifold $W$ by attaching a 4-dimensional 2-handle $D^2\times D^2$ to the product $[0,1]\times \Sigma(P(K))$ along $\{1\}\times \eta$ with framing given by $\{1\}\times\eta'$. To see that the ``outgoing'' component of $\partial W$ is a connected sum $S^3_{p/q}(K)\# Y$, recall that the 2-handle attachment replaces a tubular neighborhood $N(\eta)$ of $\{1\}\times\eta$ in $\{1\}\times\Sigma(P(K))$ with the solid torus $D^2\times \partial D^2\subset D^2\times D^2$. Next, decompose the belt sphere $\{*\}\times \partial D^2$ into two arcs and consider the corresponding decomposition of $D^2\times \partial D^2$ into the union of two 3-dimensional 2-handles $H_+$ and $H_-$. Identify $N(\eta)$ as a product $[-1,1]\times A$, where $[-1,0]\times A$ represents the intersection of $N(\eta)$ with $S^3\setminus N(K)$, and $[0,1]\times A$ represents the intersection of $N(\eta)$ with $\Sigma(P(K))\setminus \large{(}S^3\setminus N(K)\large{)}$. Notice that removing $[0,1]\times A$ from $\Sigma(P(K))\setminus \large{(}S^3\setminus N(K)\large{)}$ does not change its homeomorphism type, nor does removing $[-1,0]\times A$ from $S^3\setminus N(K)$. The complement of $N(\eta)$ in $\{1\}\times\Sigma(P(K))$ can therefore be written as a union of \begin{equation}\label{eq:etacomplement} \Sigma(P(K))\setminus \large{(}S^3\setminus N(K)\large{)}\qquad\text{and}\qquad S^3\setminus N(K),\end{equation} glued along the annulus $T\setminus A$. Therefore the ``outgoing'' component of $\partial W$ is obtained as the union of the pieces $$\Sigma(P(K))\setminus \large{(}S^3\setminus N(K)\large{)} \cup H_+ \qquad\text{and}\qquad S^3\setminus N(K) \cup H_-,$$ with the gluing of the 3-dimensional 2-handle determined  in each case by the identification of a central curve of $\{\pm 1\}\times A$ with the central circle of $H_\pm$. In other words, these 3-manifolds are precisely $$Y\setminus B^3 \qquad\text{and}\qquad S^3_{p/q}(K)\setminus B^3,$$ where $Y$ denotes Dehn filling of $\Sigma(P(K))\setminus \large{(}S^3\setminus N(K)\large{)}$. This shows that the ``outgoing'' component of $\partial W$ is diffeomorphic to $Y\#S^3_{p/q}(K).$ For more details see \cite[Lemma 7.2]{Gordon} or \cite[Section 2.2]{GordonLectures}.\\

To  complete the proof, we analyze the intersection form of the 2-handle cobordism $W$.  According to \Cref{lemma:intersection},  $H_2(W)\cong \Z$, and the sign of the intersection form is given by the sign of the rational linking number of the curve $\eta$ with its framing $\eta'$.  One can compute $lk_{\Sigma(P(K))}(\eta,\eta')$ from its  definition as the rational homology class represented by one curve in the complement of the other. To begin, we claim that \[[\eta']=p\cdot[\mu'_K]\in H_1(\Sigma(P(K))\setminus N(\eta)),\] where $\mu'_K$ denotes a parallel  copy of $\mu_K$  in the interior of the knot complement $S^3\setminus N(K)$, embedded in  $\Sigma(P(K))\setminus N(\eta)$ according to the decomposition \eqref{eq:etacomplement} of the latter.  Here, ``parallel'' is defined by the framing lifted from the Seifert framing of $\partial D^2$.  Since  $[\eta]=p\cdot[\mu_K]+q\cdot [\lambda_K]\in H_1(T)$, and $[\lambda_K]=0\in H_1(S^3\setminus N(K))$, the claimed equality follows.  Therefore, \[  lk_{\Sigma(P(K))}(\eta,\eta')= p\cdot lk_{\Sigma(P(K))}(\eta, \mu_K').\]
Now observe that the equality $[\eta]=p\cdot [\mu_K]+ q\cdot [\lambda_K]\in H_1(T)$ also holds in the homology of the complement of $\mu_K'$, since the splitting torus lies in the complement of  $\mu_K'$.  Therefore
\[ lk_{\Sigma(P(K))}(\eta, \mu_K') = p\cdot lk_{\Sigma(P(K))}(\mu_K, \mu_K') + q \cdot lk_{\Sigma(P(K))}(\lambda_K, \mu_K').\]
The linking of $\lambda_K$ and $\mu_K'$  equals one, by considering the intersection of a Seifert surface for $K$ with $\mu_K'$.  Combining the two equalities, we arrive at
\[ lk_{\Sigma(P(K))}(\eta,\eta') = p^2\cdot lk_{\Sigma(P(K))}(\mu_K, \mu_K') + pq.\]  
Thus the linking number, and hence the intersection pairing on $H_2(W_\eta)$, will be negative if and only if $p^2lk_{\Sigma(P(K))}(\mu_K,\mu_K')+ pq<0$ or, equivalently, if and only if $lk_{\Sigma(P(K))}(\mu_K,\mu_K')=lk_{\Sigma(P(U))}(\mu_U,\mu'_U)<-q/p$, as claimed.
\end{proof}

We derive a corollary from \Cref{cobordism} that will be used to prove \Cref{main}.

\begin{cor}\label{cob-torus}Let $P\subset S^1\times D^2$ be a pattern with winding number zero, and let $K$ be a knot which can be unknotted by changing only positive crossings. If $lk_{\Sigma(P(U))}(\mu_U,\mu'_U)<0$, then there exists relatively prime positive integers $p,q$ and a negative definite cobordism $W$ from $\Sigma(P(K))$ to $S^3_{p/q}(K)\#Y$, where both $S^3_{p/q}(K)$ and $Y$ are $\Z/2$--homology spheres, and $Y$ depends only on $P$ and the pair $(p,q)$.\end{cor}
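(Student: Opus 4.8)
The plan is to deduce this from Proposition \ref{cobordism} by choosing the surgery coefficients with care and then analyzing the leftover summand $Y$.

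Set $l:=lk_\Sigma(J_i,J_i)<0$. Proposition \ref{cobordism} produces a negative definite cobordism exactly when $l<-p/q$, i.e.\ when $0<p/q<|l|$; since $|l|>0$ this is arranged by taking $p=1$ and any odd integer $q>1/|l|$, which gives a relatively prime, odd, positive pair (and lets $q$ be taken arbitrarily large, a freedom used later when the companion is a torus knot). With this choice Proposition \ref{cobordism} gives a negative definite cobordism $W$ from $\Sigma(P(K))$ to $S^3_{q/p}(K)\#Y$; unwinding the proof together with the decomposition \eqref{decomposition}, $Y$ is obtained from $\Sigma(P(U))\setminus N(J_1\sqcup J_2)$ by regluing $S^3\setminus N(K)$ along $T_2=\partial N(J_2)$ and Dehn filling the torus $T_1=\partial N(J_1)$ along the chosen $(p,q)$--curve.

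It remains to establish two things. The harder one --- and the step I expect to be the main obstacle --- is that $Y$ can be taken to depend only on $P$ and $(p,q)$: the description just given still carries a copy of the companion exterior $S^3\setminus N(K)$ along $T_2$, which must be removed. This is exactly where the hypothesis that $K$ is unknotted by changing only positive crossings is used. Presenting $S^3\setminus N(K)$ by surgery on a framed link in a solid torus built from such an unknotting sequence, every surgery coefficient can be taken to equal $-1$; I would then enlarge $W$ by an additional $2$--handle, attached along a suitable curve on $T_2$, that trades this copy of $S^3\setminus N(K)$ for the trivial solid torus, so that the outgoing end becomes $S^3_{q/p}(K)\#Y$ with $Y$ now a Dehn filling of $\Sigma(P(U))\setminus N(J_1\sqcup J_2)$ --- a manifold depending only on $P$ and $(p,q)$. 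The point of the uniform sign of the crossing changes is that the self-intersection of the new handle and its linking with the first handle are governed by the $-1$--framings of the surgery curves for $K$ and all point the same way, so the enlarged intersection form remains negative definite; carrying out this sign bookkeeping, and matching the resulting outgoing boundary, is the heart of the proof. The second, routine, point is that $Y$ is a $\Z/2$--homology sphere: $\Sigma(P(K))$ is one because the double branched cover of a knot in $S^3$ has odd-order first homology, $S^3_{q/p}(K)$ is one because $q$ is odd, and a Mayer--Vietoris computation for the Dehn filling producing $Y$ --- in which the parities of $p$ and $q$ force the relevant map on $H_1(T_1;\Z/2)$ to be an isomorphism --- gives $H_*(Y;\Z/2)\cong H_*(S^3;\Z/2)$.
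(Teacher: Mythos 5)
Your overall strategy coincides with the paper's: invoke \Cref{cobordism} with a suitable odd pair $(p,q)$ satisfying $0<p/q<-l$, observe that the leftover summand still contains the second copy of $S^3\setminus N(K)$ glued along $T_2$, and use the positive-unknotting hypothesis to eliminate it while preserving negative definiteness. The choice $p=1$, $q$ odd and large is fine, and your $\Z/2$--homology sphere verification matches the paper's (which simply notes that $p,q$ odd suffices). However, the mechanism you propose for the key step is wrong. A single $2$--handle attached along a curve $c$ on $T_2$ with the $T_2$--framing does \emph{not} ``trade the copy of $S^3\setminus N(K)$ for a solid torus'': by the same Gordon-type argument used for the first handle, it splits the outgoing boundary along $T_2$ into a connected sum of the two Dehn fillings determined by the slope of $c$, so one of the new summands is a Dehn surgery on $K$ --- the companion has not been removed, merely repackaged. (Even in the one slope where that summand is $S^3$, namely $c=\mu_K$, the handle's framing and its linking with the first handle are governed by $lk_\Sigma(J_2,J_2)$ and $lk_\Sigma(J_1,J_2)$, not by the $-1$--framings of the unknotting curves, and negative definiteness of the resulting $2\times 2$ form is not automatic; moreover the positive-unknotting hypothesis would play no role, which is a sign the argument cannot be right.)

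The correct mechanism --- and the paper's --- is to attach the additional $2$--handles not along $T_2$ but along the (lift of the) unknotting curves lying in the \emph{interior} of the remaining copy of $S^3\setminus N(K)$, each with framing $-1$. Since each such curve has linking number zero with $K$ and is disjoint from everything else in the surgery description, these handles contribute a diagonal block of $-1$'s to the intersection form with no cross terms, so the enlarged cobordism stays negative definite; and blowing them down realizes the positive-to-negative crossing changes, giving a cobordism rel boundary from $S^3\setminus N(K)$ to $S^3\setminus N(U)\cong S^1\times D^2$ (this is \cite[Lemma 3.5]{hedden-kirk}, cf.\ \cite{cochran-gompf}). Regluing that solid torus along $T_2$ refills $N(J_2)$, so the leftover summand becomes the $(p,q)$--Dehn filling of $\Sigma(P(U))\setminus N(J_1)$, which depends only on $P$ and $(p,q)$ as required. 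With this replacement your proof goes through; as written, the step you yourself identify as ``the heart of the proof'' does not.
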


Knots which can be unknotted by changing positive crossings are abundant, take positive knots for example.  For our purposes, the positive torus knots $T_{r,s}$ will be sufficient. 

\begin{proof} Choose $\eta$ to be a curve in $T=\partial(S^3\setminus N(K))\subset \Sigma(P(K))$ representing the element $p\cdot[\mu_K]+q\cdot[\lambda_K]\in H_1(T;\Z)$ as in the preceding proposition. Assume $q/p$ lies in the interval $(0,-lk_{\Sigma(P(K))}(\mu_K,\mu'_K))$, where  $lk_{\Sigma(P(K))}(\mu_K,\mu'_K)=lk_{\Sigma(P(U))}(\mu_U,\mu'_U)$  denotes the rational linking number in $\Sigma(P(K))$ between the meridian and its parallel copy determined by the framing lifted  from $\partial D^2$, using the decomposition of  \Cref{decomposition}. The cobordism $W$ described in the proof of \Cref{cobordism} has intersection form presented by  $d^2(p^2lk_\Sigma(\mu_K,\mu'_K))+pq)$, which is negative since $0<q/p<-lk_{\Sigma(P(K))}(\mu_K,\mu'_K)$.    According to \Cref{lemma:intersection}, the order of the first homology of the outgoing boundary of $W$ is given by
\[ | lk_{\Sigma(P(K)}(\eta,\eta')|\cdot |H_1(\Sigma(P(K))| = |p^2lk_{\Sigma(P(K))}(\mu_K,\mu'_K)+pq|\cdot |H_1(\Sigma(P(K))|.\]
The order of $H_1(\Sigma(P(K))$ is odd, since the branched 2-fold cover of a knot in $S^3$ is a $\Z/2$-homology sphere.  Picking $p$ to be odd, and the parity of $q$ opposite that of the numerator of  $lk_{\Sigma(P(K))}(\mu_K,\mu'_K)$, ensures that the product above is also odd. Hence, the outgoing boundary component is a $\Z/2$-homology sphere.\\

Now the outgoing end of the cobordism $W$ contains $S^3_{p/q}(K)$ as a connected summand, and its remaining summand is the 3-manifold resulting from Dehn filling the complement in $\Sigma(P(K))$ of one of the lifts of $S^3\setminus N(K)$.  In terms of the decomposition of \Cref{decomposition}, this is a Dehn filling of the manifold 
\[  \Sigma(P) \underset{\widetilde{h}}{\cup} \left(S^3\setminus N(K)\right). \]
Since $K$ can be unknotted by a series of positive-to-negative crossing changes, there is a negative definite cobordism (rel. boundary) from $S^3\setminus N(K)$ to $S^3\setminus N(U)\cong S^1\times D^2$,  see \cite[Lemma 3.5]{hedden-kirk} or \cite{cochran-gompf}. Applying this relative cobordism to the remaining copy of $S^3\setminus N(K)$ in the outgoing boundary component of $W$, we obtain a new cobordism whose outgoing manifold is the connected sum of $S^3_{p/q}(K)$ with a manifold $Y$. Notice that $Y$ is obtained by filling both boundary components of $\Sigma(P)$ with solid tori, and thus it depends only on the pattern $P$ and the integers $(p,q)$. Moreover,  $Y\# S^3_{p/q}(K)$ has homology isomorphic to that of the outgoing end of $W$, since the latter is obtained by a sequence of $-1$ surgeries along a null-homologous split link of unknots (which realize the crossing changes in an unknotting sequence for $K$ as in \cite[Chapter 6.C]{Rolfsen}). Choosing $p,q$ as above ensures $Y\# S^3_{p/q}(K)$, and hence $Y$ is a $\Z/2$-homology sphere.\end{proof}

\section{Instanton Obstruction to Sliceness}\label{sec:instanton}
In this section we survey a method which uses moduli spaces of anti-self-dual (ASD) connections on $SO(3)$ bundles over $4$--manifolds with cylindrical ends to study the $3$--dimensional $\Z/2$--homology cobordism group.  This technique provides an obstruction to the existence of a negative definite $4$--manifold whose boundary is a given disjoint union of $3$--manifolds.  To explain it,   recall that the relative Chern-Simons invariant $cs(\alpha,\beta)$ between  flat $SO(3)$ connections $\alpha,\beta$ on a closed oriented $3$--manifold, is  defined as the integral \[ cs(\alpha,\beta):=  -\frac{1}{8\pi}\int_{[0,1]\times Y} Tr(F(A_t)\wedge F(A_t))\in \mathbb{R}/\mathbb{Z},\]
where $A_t$ is any path of connections  between $\alpha$ and $\beta$.  The right hand side is the Chern-Weil integrand for the first Pontryagin class.   Integrality of the first Pontryagin number of a bundle on a closed $4$--manifold implies that, modulo $\Z$, the integral is independent of the chosen path, and invariant under the gauge group actions on $\alpha$ and $\beta$. The obstruction is phrased in terms of the {\em minimal Chern-Simons invariant}.  For a $\Z/2$--homology 3--sphere $Y$, this is defined as \[\tau(Y):=\min\{cs(\alpha,\theta) \mid \alpha \text{ flat connection on }Y\}\in (0,1],\] where $\theta$ is the trivial connection on the unique (trivial) $SO(3)$ bundle on $Y$, and where we have identified $\R/\Z$ with $(0,1]$ in the obvious way (we could, in fact, lift the relative and minimal Chern-Simons values to $\R/4\Z$, but will have no need to do so for our purposes. See Section 2.2 of \cite{hedden-kirk-2} for more details.)  \\

Using Fintushel-Stern's results on equivariant Yang-Mills theory on pseudofree orbifolds \cite{fs-pseudofree}  in conjunction with the Chern-Simons invariants, Furuta developed a powerful cobordism obstruction \cite[Theorem 2.1]{furuta} based on compactness results for equivariant ASD connections, see also  \cite[Theorem 5.1]{fs-inst}. Drawing on work of Floer \cite{Floer}, Taubes \cite{Taubes-periodic,Taubes-L2}, Morgan-Mrowka-Ruberman \cite{MMR}, and Donaldson \cite{Donaldson},  Hedden and Kirk extended Furuta's technique to non-compact $4$-manifolds with more general cylindrical ends.  The following theorem, in the case that the numerator $p$ of the surgery coefficient  is $1$ and  with the additional assumption that the obstructed $4$-manifold $X$ is an integral homology punctured sphere, is a restatement of Furuta's theorem \cite[Theorem 2.1]{furuta}. Furuta mentions, without proof, that the result extends to obstruct  $X$ with definite intersection form and with ``certain torsions in their homology groups.'' Such extensions were taken up and further refined by Hedden-Kirk \cite{hedden-kirk-2}, and the $p>1$ case stated here utilizes their refinements for definite $4$-manifolds whose ends have $\Z/2$-homology sphere cross sections.  We will use \cite{hedden-kirk-2} as the main reference since it provides a convenient unified treatment. The following result will be used to establish \Cref{main}.

\begin{theorem}[Furuta \cite{furuta} $p=1$, Hedden-Kirk \cite{hedden-kirk-2} $p>1$]\label{cobound} Consider a family $\left\{\Sigma_i\right\}_{i=1}^{N}$ of oriented $\Z/2$--homology 3-spheres. Let $(p,q)$ and $(r,s)$ be two pairs of relatively prime and positive integers and suppose $\Sigma_N=S^3_{p/q}(T_{r,s})$.  If
\begin{equation}\label{criterion}
\frac{p}{rs(qrs-p)}<\min\left\{\tfrac{1}{r},\;\tfrac{1}{s},\;\tfrac{1}{qrs-p},\;\tau(\pm\Sigma_1),\ldots,\tau(\pm\Sigma_{N-1})\right\},\end{equation}
then there does not exist a smooth  $4$--manifold $X$ with $H^1(X,\Z/2)=0$ and negative definite intersection form, whose oriented boundary is given by  
\[ \partial X= \coprod\limits_{i=1}^{N} a_i \Sigma_i,\ \ \mathrm{with}\ \ a_i\in\Z, a_N> 0.\] 
\end{theorem}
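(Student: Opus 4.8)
The plan is to run the instanton cobordism argument of Furuta and Fintushel--Stern, in the refined form developed by Hedden--Kirk, so that the work consists of assembling their machinery while keeping track of how each term in \eqref{criterion} is spent. Suppose, for contradiction, that such an $X$ exists: smooth, negative definite, with $H^1(X;\Z/2)=0$ and $\partial X=\coprod_{i=1}^{N}a_i\Sigma_i$, $a_N>0$. The first step is to cap off the $\Sigma_N$ boundary components. Since $\Sigma_N=S^3_{q/p}(T_{r,s})$ is a Seifert fibered $\Z/2$--homology sphere with three exceptional fibers of orders $r$, $s$, and $prs-q$, we may glue onto each of the $a_N$ copies of $\Sigma_N$ in $\partial X$ a copy of the negative definite mapping-cylinder orbifold $R$ of the relevant Seifert fibration, a $4$--orbifold with three cone points of those orders; the hypothesis $p,q>0$ is exactly what makes the orientations compatible (equivalently, what makes $R$ negative rather than positive definite). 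The result is a closed $4$--orbifold $Z=X\cup a_N R$ carrying $\sum_{i<N}|a_i|$ cylindrical ends modeled on the remaining $\Sigma_i$; because the gluing takes place along $\Z/2$--homology spheres, a Mayer--Vietoris argument shows that $Z$ is again negative definite with $H^1(Z;\Z/2)=0$, so $b_1(Z)=b^+(Z)=0$.

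Next I would invoke the Fintushel--Stern theory of anti-self-dual moduli spaces over pseudofree orbifolds. Fix the $SO(3)$ orbifold bundle $E\to Z$ adapted to the cone points which is nontrivial, with minimal energy, near exactly one of the $R$--summands and flat near every other cone point. A direct index computation identifies the energy of $E$ as $\kappa:=\tfrac{q}{rs(prs-q)}$ --- normalized so that a bubble concentrated at a cone point of order $a$ carries energy at least $\tfrac1a$ --- and shows the formal dimension of the moduli space $\mathcal M(E)$ to be $1$. After a generic perturbation compatible with the cone points and with holonomy perturbations along the cylindrical ends, $\mathcal M(E)$ is a smooth $1$--manifold away from its reducible locus. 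By the Fintushel--Stern computation of that locus, together with the negative definiteness of $Z$ and the fact that $\kappa$ is non-integral --- so no abelian reduction can be supported on $X$ or on a flat $R$--summand --- the distinguished Seifert reducible on the nontrivial summand is the \emph{only} reducible, and it contributes precisely one boundary (cone) end to $\mathcal M(E)$. This is also why we arrange the bundle to be nontrivial near a single summand: an even number of reducibles would produce no contradiction.

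The crux --- and the step I expect to be the main analytic obstacle --- is to prove that $\mathcal M(E)$ is \emph{compact}, for then it is a compact $1$--manifold with a single boundary point, which is impossible, and the proof is complete. Failure of compactness can occur only by energy concentrating at a point or by energy escaping a cylindrical end, and here \eqref{criterion} is consumed. A bubble at a cone point of order $a\in\{r,s,prs-q\}$ would require energy at least $\tfrac1a$, ruled out by $\kappa<\tfrac1r,\tfrac1s,\tfrac1{prs-q}$. Along an end $\Sigma_i$ with $i<N$, a finite-energy ASD connection converges to a flat connection $\alpha$, and if $\alpha$ were nontrivial the energy captured by that end would be at least $cs(\alpha,\theta)\ge\tau(\pm\Sigma_i)$ --- both signs arise since $a_i$ may be negative and the end may be incoming or outgoing --- contradicting $\kappa<\tau(\pm\Sigma_i)$. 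Hence $\alpha$ is trivial, no energy escapes, and Uhlenbeck compactness upgrades the weak limit to an honest one, so $\mathcal M(E)$ is compact. What remains is the technical backbone behind these steps: Fredholm and transversality theory for $SO(3)$ connections on orbifolds with cylindrical ends, a perturbation scheme treating cone points, the reducible, and the ends simultaneously, and the precise index and reducible computations for the Seifert orbifold --- all of which is carried out in \cite{furuta,fs-pseudofree,hedden-kirk,hedden-kirk-2}.
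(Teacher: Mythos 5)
Your proposal is essentially the paper's argument (which is itself only sketched, being a restatement of Furuta, Fintushel--Stern, and Hedden--Kirk): cap off the $a_N$ copies of $\Sigma_N=S^3_{q/p}(T_{r,s})=\overline{\Sigma(r,s,prs-q)}$ with negative definite pieces carrying the nontrivial $SO(3)$ bundle on exactly one summand, obtain a one-dimensional ASD moduli space whose singular points are the reducibles, and derive a contradiction between the parity of that count and compactness, with compactness enforced by exactly the budget in \eqref{criterion} (the terms $\tfrac1r,\tfrac1s,\tfrac1{prs-q}$ against concentration at the exceptional fibers, the terms $\tau(\pm\Sigma_i)$ against energy escaping down the ends). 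Your orbifold formulation versus the paper's cylindrical-end/mapping-cylinder-with-lens-space-boundary formulation is an immaterial difference of packaging.

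One correction: your claim that the distinguished Seifert reducible is the \emph{only} reducible does not follow from $H^1(X;\Z/2)=0$ and negative definiteness alone. That hypothesis kills $2$--torsion in $H_1(X;\Z)$, but odd torsion may remain and produces additional reducibles; the correct statement, and the one the paper uses, is that the number of reducibles --- computed from the torsion of $H_1(X;\Z)$ --- is \emph{odd}. Since a compact $1$--manifold has an even number of boundary points, oddness is all the parity argument needs, so your proof survives once you weaken the uniqueness claim to oddness.
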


In the above, $a_i\Sigma_i$ means the disjoint union of $a_i$ copies of $\Sigma_i$, endowed with the given orientation if $a_i>0$ and opposite otherwise. We  sketch a proof of the theorem, argued by way of contradiction.
\\

{\em Sketch of proof:}  The manifold $-\Sigma_N=-S^3_{p/q}(T_{r,s})$ is  diffeomorphic to  the link of the complex surface singularity \[ z_0^r+z_1^s+z_2^{qrs-p}=0.\] With this description, $-\Sigma_N$ is  clearly Seifert fibered, and bounds two closely related negative definite smooth $4$--manifolds.  The first is a resolution $R$ of the singularity by repeated blow-ups.  The second is the  smooth negative definite $4$--manifold $W$ obtained  from the mapping cylinder of the Seifert fibration $-\Sigma_N\overset{\pi}\to S^2$ by excising neighborhoods of the singularities that arise from the singular fibers.  Thus $W$ has three additional lens space boundary components, $L(r,b_0)$, $L(s,b_1)$ and $L(qrs-p,b_2)$, where we suppress the precise values of  $b_i$ for simplicity. Over  $W$ one constructs a non-trivial $SO(3)$ bundle $E$ by stabilizing an $SO(2)$ bundle $\mathcal{L}$ whose Euler class $e(\mathcal{L})\in H^2(W)\cong \Z$ generates. One then defines an associated moduli space $\mathcal{M}$ of ASD connections on $E$. To do so, we must prescribe boundary conditions for the connections in $\mathcal{M}$, which amounts to specifying flat connections on the bundles that arise by restricting $E$ to the components of $\partial W$, see \cite[Definition 2.1 and Section 2.3]{hedden-kirk-2}. We take the trivial flat connection on  $-\Sigma_N$, and non-trivial flat connections on the lens space boundary components which are determined by the unique flat connections for the restriction of $\mathcal{L}$ along them, see  \cite[Lemmas 2.10, 2.11]{hedden-kirk-2}.  One then attaches cylindrical ends to $W$ and defines $\mathcal{M}$ to be gauge equivalence classes of ASD connections which limit exponentially fast in an appropriate Sobolev space to the prescribed flat connections along the ends.   Assuming non-degeneracy of the boundary flat connections, the virtual dimension of this moduli space can be computed using the Atiyah-Patodi-Singer index theorem \cite[Proposition 2.6]{hedden-kirk-2}. Flat connections on lens spaces are non-degenerate, and the trivial flat connection on {\em any} rational homology sphere is non-degenerate, hence the index theorem computes the virtual dimension in terms of rho invariants of the lens spaces.  Using the Neumann-Zagier formula \cite{nz}, we find that the dimension equals $1$ whenever $(p,q)$ and $(r,s)$ are pairs of relatively prime positive integers, see \cite[Equation 3.4]{hedden-kirk-2}.  Moreover, the moduli space is non-empty, since it possesses a unique singular point that corresponds,  by way of Hodge theory, to an explicit reducible connection \cite[Lemma 2.12]{hedden-kirk-2}. The slice theorem \cite[Theorem 4.13]{Donaldson} endows the singular point with a neighborhood  diffeomorphic to a half open interval $[0,\epsilon)$. \\

Assume that a negative definite $4$--manifold $X$ as in the theorem exists. One can glue it to the disjoint union of $W$ and $(a_N-1)$ copies of the resolution $R$ along  $a_N\Sigma_N$ to obtain \[\widehat{X}=X\cup W\cup (a_N-1)R.\]  The fact that $X$ and $R$ are negative definite allows one to argue, again through the Atiyah-Patodi-Singer index theorem, that the extension $\widehat{E}$ of  $E\to W$ by the trivial bundles over $X$ and $R$ possesses a  moduli space $\widehat{\mathcal{M}}$ of ASD connections with the same virtual dimension as $\mathcal{M}$.  The flat connections along the new ends arising from the  boundary components of $X$ are all taken to be trivial, and we rely here on the fact that the trivial flat connection  is non-degenerate.     As above, Hodge theory shows that reducible connections on $\widehat{E}$ give rise to   singular points of $\widehat{\mathcal{M}}$.  The enumeration of these singularities is more subtle for $\widehat{E}$, but the  hypothesis that the boundary components of $X$ are $\Z/2$-homology spheres reduces it, by \cite[Theorem 2.16]{ hedden-kirk-2},  to the  calculation of a quantity derived from the intersection form on $H^2(X)$, see \cite[Definition 2.14]{hedden-kirk-2}.  Our  assumption that $H^1(X;\Z/2)=0$  implies that the order of the torsion subgroup of $H^2(X,\partial X)$ is odd, which further implies that the aforementioned cohomological quantity is also odd, see \cite[Proof of Proposition 2.1]{hedden-kirk}.  It follows that the  number of singular points in $\widehat{\mathcal{M}}$ is odd.  This shows that the moduli space $\widehat{\mathcal{M}}$ is  non-compact, since a compact $1$--manifold has an even number of boundary points (which we identify with the singular points using their $[0,\epsilon)$ neighborhoods).\\

Now failure of compactness in moduli spaces of ASD connections on manifolds with cylindrical ends occurs only through  bubbling \cite{Uhlenbeck} or by energy escaping down the ends in the form of broken  flow lines for the gradient of the Chern-Simons functional \cite{Floer,Taubes-periodic,MMR,Donaldson}.  Each of these phenomena require a certain quanta of  analytic energy which, for an ASD connection, is given by the integral over $\widehat{X}$ of the integrand defining the Chern-Simons invariant; for points in the moduli space $\widehat{\mathcal{M}}$, the energy is given by $q/rs(qrs-p)$, a quantity determined by the cup-square of the Euler class of $\mathcal{L}$.  The assumptions  in  \eqref{criterion} guarantee that bubbling cannot occur (since this quantity is less than $4$), and that a sequence cannot diverge to a broken flow line. Indeed, the minimal Chern-Simons invariants of $\pm\Sigma_i$ and the lens space boundary components $L(r,b_0)$, $L(s,b_1)$, $L(qrs-p,b_2)$ provide a lower bound for the amount of energy carried by a broken flow line. The first three terms in \eqref{criterion} give lower bounds for the minimal Chern-Simons invariant of these lens spaces. It follows that  the moduli space {\em is} compact, a contradiction which rules out the existence of $X$.  See \cite[Section 2.5]{hedden-kirk-2} for more details on the compactness result for $\widehat{\mathcal{M}}$, which relies crucially on Morgan-Mrowka-Ruberman's ``convergence with no loss of energy'' theorem \cite[Theorem 6.3.3]{MMR}. \qed \\

The following corollary explains how negative definite cobordisms can be used to study linear independence  in the $\Z/2$--homology cobordism group.  Recall that the $\Z/2$--homology cobordism group $\Theta_{\Z/2}$ consists of equivalence classes of oriented $\Z/2$--homology $3$--spheres, where two such are equivalent if they cobound a homology cylinder.  Addition is given by connected sum.   

\begin{cor}\label{cobound-sum} Let $\left\{\Sigma_i\right\}_{i=1}^{N}$ be a family of oriented $\Z/2$--homology 3-spheres. Suppose $\Sigma_N$ is  cobordant via a negative definite cobordism with $H^1(Z;\Z/2)=0$ to $S^3_{p/q}(T_{r,s})\#Y$, where $Y$ is any $\Z/2$--homology 3--sphere and $(p,q)$ and $(r,s)$ are pairs of relatively prime  positive integers with $p$ odd. If
\begin{equation}
\frac{p}{rs(qrs-p)}<\min\left\{\tfrac{1}{r},\;\tfrac{1}{s},\;\tfrac{1}{qrs-p},\;\tau(\pm\Sigma_1),\ldots,\tau(\pm\Sigma_{N-1}),\tau(Y)\right\},\end{equation} then $\Sigma_N$ has infinite order in $\Theta_{\Z/2}$ and is independent from the other manifolds: $$\langle\Sigma_N\rangle\cap \langle\Sigma_1,\ldots,\Sigma_{N-1}\rangle =\{0\},$$ where $\langle-\rangle$ denotes the subgroup of $\Theta_{\Z/2}$ generated by  $-$.
\end{cor}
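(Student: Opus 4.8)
The plan is to argue by contradiction, using \Cref{cobound} as the engine. First I would rephrase the conclusion: the statements that $\Sigma_N$ has infinite order in $\Theta_{\Z/2}$ and that $\langle\Sigma_N\rangle\cap\langle\Sigma_1,\dots,\Sigma_{N-1}\rangle=\{0\}$ are together equivalent to the assertion that no relation $\sum_{i=1}^{N}a_i\Sigma_i=0$ holds in $\Theta_{\Z/2}$ with $a_N\neq 0$. So I assume such a relation exists; since $\Theta_{\Z/2}$ is an abelian group I may negate all the coefficients and thereby take $a_N>0$. By definition of $\Theta_{\Z/2}$, this means that the connected sum of $a_N$ copies of $\Sigma_N$ with, for each $i<N$, $|a_i|$ copies of $\Sigma_i$ oriented by $\sgn(a_i)$, bounds a $\Z/2$--homology ball $B$, i.e. a compact $4$--manifold with $H_*(B;\Z/2)\cong H_*(D^4;\Z/2)$. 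In particular $H_2(B;\R)=0$ --- so the intersection form of $B$ is vacuously negative definite --- and $H^1(B;\Z/2)=0$.

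The substantive step is to trade this connected sum for a \emph{disjoint} union of $\Z/2$--homology spheres, exactly one of which is $S^3_{q/p}(T_{r,s})$, while retaining negative-definiteness and the vanishing of $H^1(\,\cdot\,;\Z/2)$, so that \Cref{cobound} becomes applicable. I would carry this out by attaching $3$--handles along the separating $2$--spheres that realize the connect-sum decompositions. The key point is that attaching a $3$--handle to a $4$--manifold $V$ along a null-homologous $2$--sphere in $\partial V$ alters neither $\pi_1$ nor $H_1(\,\cdot\,;\Z/2)$, and when $H_2(V;\R)=0$ it leaves $H_2(\,\cdot\,;\R)=0$ as well (only a new class in $H_3$ is created); since every $4$--manifold produced below will have vanishing real second homology, no isotropic vector is ever introduced into the intersection form and $H^1(\,\cdot\,;\Z/2)$ stays $0$. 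Applying this surgery to $B$ produces $B'$ with $\partial B'$ the disjoint union $a_N\,\Sigma_N\sqcup\coprod_{i=1}^{N-1}a_i\Sigma_i$ (orientation conventions as in \Cref{cobound}). Gluing $a_N$ copies of the cobordism $Z$ onto $B'$ along the copies of $\Sigma_N$ --- using that $Z$ is negative definite with $H^1(Z;\Z/2)=0$, and that a Mayer--Vietoris argument across the $\Z/2$--homology spheres $\Sigma_N$ preserves both of these --- gives $X_0$ with $\partial X_0=a_N\bigl(S^3_{q/p}(T_{r,s})\#Y\bigr)\sqcup\coprod_{i=1}^{N-1}a_i\Sigma_i$. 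A last round of $3$--handles, one along the connect-sum sphere inside each $S^3_{q/p}(T_{r,s})\#Y$ component, separates those summands and yields a negative definite $4$--manifold $X$ with $H^1(X;\Z/2)=0$ and
\[
\partial X \;=\; a_N\,S^3_{q/p}(T_{r,s})\ \sqcup\ a_N\,Y\ \sqcup\ \coprod_{i=1}^{N-1}a_i\Sigma_i.
\]

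To finish, I would apply \Cref{cobound} to the enlarged family $\Sigma_1,\dots,\Sigma_{N-1},Y,S^3_{q/p}(T_{r,s})$, with $S^3_{q/p}(T_{r,s})$ playing the role of ``$\Sigma_N$'': it is a $\Z/2$--homology sphere precisely because $q$ is odd, it occurs in $\partial X$ with the positive coefficient $a_N$, and the inequality assumed in the corollary is exactly the instance of \eqref{criterion} for this family (only $\tau(Y)$ rather than $\tau(\pm Y)$ is needed, since $Y$ enters $\partial X$ with a single, positive orientation, while the $\Sigma_i$ for $i<N$ may enter with either). Since \Cref{cobound} forbids the existence of such an $X$, this contradiction completes the proof.

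I expect the genuine obstacle --- and the place a careful reader will look hardest --- to be the $3$--handle reduction from connected sums to disjoint unions: one has to check at every stage that attaching a handle along a null-homologous boundary sphere, in a $4$--manifold whose real second homology already vanishes, neither spoils the definiteness of the intersection form nor creates first $\Z/2$--cohomology, and that the gluings of $Z$ are orientation-compatible. Everything else --- the group-theoretic restatement of the conclusion, the Mayer--Vietoris bookkeeping over gluings along $\Z/2$--homology spheres, and the matching of the hypothesis with \eqref{criterion} --- should be routine once this is set up correctly. (Alternatively one could invoke the standard fact that $\Sigma\#\Sigma'$ and $\Sigma\sqcup\Sigma'$ cobound a cobordism with $H^1(\,\cdot\,;\Z/2)=0$ and trivial intersection form and avoid the explicit handle picture.)
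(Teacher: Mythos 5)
Your proposal is correct and follows essentially the same argument as the paper: assume a relation with $a_N>0$ bounding a $\Z/2$--homology ball, glue on $a_N$ copies of the negative definite cobordism $Z$, attach $3$--handles along the connect-sum spheres to convert connected sums into disjoint unions, and derive a contradiction with \Cref{cobound} applied to the enlarged family including $Y$ and $S^3_{q/p}(T_{r,s})$. The only difference is the order in which the $3$--handle attachments and the gluings of $Z$ are performed, which is immaterial.
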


\begin{proof} Suppose, to the contrary, that either the intersection of the subgroups in question is non-trivial or $\Sigma_N$ has finite order in $\Theta_{\Z/2}$.  This implies  there exist integers $c_1,\ldots,c_N$ with $c_N>0$ such that $$c_N\Sigma_N\#\overline{\left(c_1\Sigma_1\#\ldots\#c_{N-1}\Sigma_{N-1}\right)}=\partial Q,$$ for $Q$ a smooth 4--manifold with the same $\Z/2$--homology groups as the 4--ball. Here, we temporarily use the notation $c_i\Sigma_i$ to denote the connected sum (instead of disjoint union) of $c_i$ copies of $\Sigma_i$, with opposite its given orientation if $c_i<0$.  Form the 4--manifold \[{X_0}=Q\underset{c_N\Sigma_N}{\cup} c_N Z,\] where $c_N Z$ is the boundary connected sum of $c_N$ copies of  the negative definite cobordism from $\Sigma_N$ to $S^3_{p/q}(T_{r,s})\#Y$. Attaching $3$--handles to $X_0$ along all of the connected sum spheres yields a smooth negative definite 4--manifold $X$ with $H^1(X;\Z/2)=0$, and boundary  
\[ \partial X = c_N S^3_{p/q}(T_{r,s})\coprod c_N Y \coprod\limits_{i=1}^{N-1} -c_i \Sigma_i,\ \ \mathrm{with}\ \ c_i\in\Z, c_N> 0.\] 

Relabelling as appropriate, we arrive at a contradiction to \Cref{cobound}.
\end{proof}

 There is a well-known homomorphism from the concordance group to the $\Z/2$--homology cobordism group
\[ \Sigma: \mathcal{C}\to \Theta_{\Z/2},\]
defined by sending the concordance class of a knot $K$ to the homology cobordism class of its branched $2$--fold cover $\Sigma(K)$.  This homomorphism,  in conjunction with \Cref{cobound}, provides a  tool  for showing that the image of a satellite operator has infinite rank.  Indeed, to show that an operator $P$ has infinite rank, we only need to argue that the composition $\Sigma \circ P$ does.  For this, it suffices to find an infinite  collection of torus knots $\{T_{r_i,s_i}\}_{i=1}^\infty$ for which the branched covers of their satellites $\{\Sigma(P(T_{r_i,s_i}))\}_{i=1}^\infty$ are independent in $\Theta_{\Z/2}$.  We accomplish this in the next section.

\section{Proof of the Main Result}\label{sec:proof}
We are now in a position to use the instanton  obstruction from the previous section in conjunction with the cobordisms  constructed in  \Cref{sec:topology} to prove our main theorem.  For convenience, we restate it here. 

\begin{reptheorem}{main} Let $P\subset S^1\times D^2$ be a pattern with winding number zero, and consider the branched double cover $\Sigma(P(U))$. If $\partial D^2$, equipped with the Seifert framing from $D^2$, has framed lifts in $\Sigma(P(U))$ with non-zero rational linking number, then there exists an infinite family of knots $\{K_i\}_{i=1}^\infty$ for which $\{P(K_i)\}_{i=1}^\infty$ is  a $\Z$--independent family in $\mathcal{C}$. 
\end{reptheorem}

The family $\{K_i\}_{i=1}^\infty$ will be a carefully selected subset of the torus knots, chosen so that the instanton obstruction  can be applied to the collection $\{\Sigma(P(K_i))\}_{i=1}^\infty$ to establish its independence in the $\Z/2$--homology cobordism group $\Theta_{\Z/2}$.  By the remarks at the end of the last section, this will  show that $\{P(K_i)\}_{i=1}^\infty$ is an infinite  independent family in $\mathcal{C}$. We  construct the family of torus knots recursively.  Note that the set of all torus knots is linearly independent in $\mathcal{C}$, by  Litherland's result  \cite{litherland}, though this fact will not be necessary for our proof.

\begin{proof} Recall that, according to \Cref{decomposition}, the lift of $\partial D^2$ to $\Sigma(P)$ gets identified with $\mu_K$ in $\Sigma(P(K))$ for any choice of companion $K$. Denote by \[l=lk_{\Sigma(P(U))}(\mu_U,\mu'_U)=lk_{\Sigma(P(K))}(\mu_K,\mu'_K) \] the rational linking number between the lifts of $\partial D^2$ and its Seifert framed push-off. Assume first that $l$ is strictly negative.  Choose, once and for all, a pair of relatively prime positive integers $p,q$ with $p$ odd, $q$ having parity opposite that of the numerator of  $l$, and satisfying $l<-q/p<0$.   In terms of these, we define the function of two variables
\[ e(r,s)=\frac{p}{rs(qrs-p)}.\]
  \Cref{cob-torus} provides a negative definite cobordism from $\Sigma(P(T_{r,s}))$ to $S^3_{p/q}(T_{r,s})\#Y$ for any choice of  torus knot.  We therefore choose a pair of relatively prime positive integers $r_1,s_1$ so that  $$e(r_1,s_1)<\min\left\{\tau(Y),\;\;\tfrac{1}{r_1},\;\tfrac{1}{s_1},\;\tfrac{1}{qr_1s_1-p}\right\}.$$

Letting $K_1=T_{r_1,s_1}$, and $\Sigma_1=\Sigma(P(K_1))$, the hypothesis for \Cref{cobound-sum} are met, thereby showing that $\Sigma_1$ has infinite order in $\Theta_{\Z/2}$, in other words, $\langle \Sigma_1\rangle \cong\Z$.\\

Now choose a pair of relatively prime positive integers $r_2,s_2$ so that $$e(r_2,s_2)<\min\left\{\tau(Y),\;\tau(\pm\Sigma_1),\;\tfrac{1}{r_2},\;\tfrac{1}{s_2},\;\tfrac{1}{qr_2s_2-p}\right\}.$$ 

 \Cref{cob-torus} again gives a negative definite cobordism, now from $\Sigma_2=\Sigma(P(T_{r_2,s_2}))$ to $S^3_{p/q}(T_{r_2,s_2})\#Y$.  \Cref{cobound-sum}, together with our previous choices, gives $\langle \Sigma_2\rangle \cong \Z$, and $ \langle \Sigma_2\rangle \cap \langle \Sigma_1\rangle=0$, so that $\langle \Sigma_2,\Sigma_1\rangle\cong \Z^2$.\\

In general, suppose knots $K_1,\ldots,K_{N-1}$ have been chosen so that \begin{equation}\label{sum}\langle \Sigma_1,\Sigma_2,\ldots,\Sigma_{N-1}\rangle \cong \Z^{N-1}.\end{equation} Proceeding as before, choose $r_N,s_N$ so that 
 $$e(r_N,s_N)<\min\left\{\tau(Y),\;\tau(\pm\Sigma_{1}),\; \ldots,\;\tau(\pm\Sigma_{N-1}),\;\tfrac{1}{r_N},\;\tfrac{1}{s_N},\;\tfrac{1}{qr_Ns_N-p}\right\}.$$ 
For $K_N=T_{r_N,s_N}$ and $\Sigma_N=\Sigma(P(K_N))$, \Cref{cobound-sum} together with \Cref{sum} shows that $\langle \Sigma_1,\Sigma_2,\ldots,\Sigma_{N}\rangle \cong \Z^{N}$.\\

This recursive procedure  defines a family $\{K_i\}_{i=1}^\infty$. Any finite subset  of $\{P(K_i)\}_{i=1}^\infty$  generates a full rank subgroup in $\mathcal{C}$, since we have shown its image under  the homomorphism $\Sigma:\mathcal{C}\rightarrow \Theta_{\Z/2}$ has full rank.  Since the finite subset can be chosen arbitrarily, $\{P(K_i)\}_{i=1}^\infty$ is an independent family in $\mathcal{C}$. \\

Having treated the case when  $l=lk_{\Sigma(P(U))}(\mu_U,\mu'_U)$ is negative, we notice that if this quantity is positive for a pattern $P$ then its mirror $m(P)$ has rational linking number negative. Thus, if $\{m(P)(K_i)\}_{i=1}^\infty$ is an independent family, then $\{P(m(K_i))\}_{i=1}^\infty$ is independent as well.
\end{proof}

\section{Examples}\label{sec:examples}

In this final section we present a method to compute $l$, the lift of the Seifert framing of $\partial D^2$ to $\Sigma(P(U))$. We also verify that some explicit satellite operators have infinite rank, and indicate a robust method for obtaining new examples.

\subsection{Computing the rational linking number}
The hypothesis required of a winding number zero pattern by our theorem can be easily verified in terms of linear algebra, as we now explain. The algorithm we present follows from  a simple combination of a formula of Cha and Ko \cite[Theorem 3.1]{cha-ko},  which computes linking numbers in rational homology spheres described as integral surgery on a framed link, with an algorithm described by Akbulut and Kirby in \cite{akbulut-kirby} to realize branched covers as surgery on a link derived from a Seifert surface for the branching set.\\

We first obtain a link surgery presentation for $\Sigma(P(U))$.  To construct this, begin with a  Seifert surface $F\subset S^1\times D^2$ for the pattern $P$ (which exists, by the winding number zero assumption). An unknotted embedding of $S^1\times D^2$  into the $3$-sphere  allows us to consider $F$ as a Seifert surface for $P(U)$. A framed link $L$ describing $\Sigma(P(U))$ in terms of surgery is  obtained from $F$ by the algorithm in \cite[Section 2]{akbulut-kirby}.  If  $V$ is a Seifert matrix for $F$, then the linking matrix for this surgery presentation is given by \[A= [V+V^T].\]See  \Cref{twist} for an example of this procedure.   \\ 

Now, for an arbitrary curve $\gamma$ embedded in the complement of $L$, denote by $[lk_{S^3}(\gamma,L)]$ the row vector whose $i$--th entry is $lk(\gamma,L_i)$, where $L_i$ is the $i$--th component of $L$.  The formula of  Cha and Ko \cite[Theorem 3.1]{cha-ko} expresses the linking of curves in $\Sigma(P(U))$ as follows:
\[ lk_{\Sigma(P(U))}(\eta,\gamma) = lk_{S^3}(\eta,\gamma)-[lk_{S^3} (\eta,  L)]\cdot [A]^{-1}\cdot [lk_{S^3} (\gamma,  L)]^T. \]
Let $J_1,J_2$ denote the framed lifts of $\partial D^2$ to the surgery presentation of $\Sigma(P(U))$.   We wish to compute $l= lk_{\Sigma(P(U))}(J_i,J_i')$.  The linking number of $J_i$ with $J_i'$, viewed  as curves in $S^3$, will be zero.  This is because the framing is the lift of the Seifert framing.   Thus, in this case the first term in the  formula above vanishes,  yielding  
\begin{equation}\label{lk-sigma} lk_{\Sigma(P(U))}(J_i,J_i') =-[lk_{S^3} (J_i, L)]\cdot[V+V^T]^{-1}\cdot[lk_{S^3} (J_i', L)]^T.\end{equation}

We can compute this quantity concretely, without reference to the surgery presentation.  If  $\{a_i,b_i\}_{i=1}^g$ denotes a basis for $H_1(F;\Z)\cong \Z^{2g}$, we have an Alexander dual basis $a_i^*,b_i^*$ for $H_1(S^3\setminus F;\Z)\cong \Z^{2g}$,  given as follows:  for a curve $e\subset F\subset S^3$ in the basis, its dual $e^*\subset S^3\setminus F$ is a curve linking $e$ exactly once, and linking no other basis curve. To check whether the pattern knot satisfies our hypothesis, one expresses $\partial D^2$ in terms of the Alexander dual basis, by a vector we denote $lk(\partial D^2)$. Then \eqref{lk-sigma} shows that the quantity \[  l=  -lk(\partial D^2) [V+V^T]^{-1} lk(\partial D^2)^T\] is exactly $lk_{\Sigma(P(U))}(J_i,J_i')$, the linking number of a framed lift of $\partial D^2$ to the branched double cover. 

\subsection{Method for producing Satellite Operators of Infinite Rank}
The previous calculation of the linking number   yields  a concrete method for producing infinite rank winding number zero satellite operators. To do this, consider {\em any} embedded surface $F\subset S^3$ with  a single boundary component.  The boundary $\partial F$ will be a (possibly trivial) knot, $P$.   Now consider any {\em unknotted} curve $\gamma\subset S^3\setminus F$.  Then the complement of $\gamma$ is homeomorphic to a solid torus, with $P$ embedded therein with winding number zero (since $F$ is a Seifert surface for $P$ in the complement).   One can express $\gamma$, as above, as a vector in terms of the Alexander dual basis for $H_1(S^3\setminus F;\Z)\cong \Z^{2g}$, and compute the linking number of its framed lift to the branched double cover of $P$ by the same formula.  This provides a far-reaching method for producing satellite operators with infinite rank.  We can also easily specify embeddings of surfaces whose Seifert forms have trivial Alexander polynomial (for instance, by taking higher genus Seifert surfaces for an unknot, as in the case of the Whitehead doubling operator).  Finding appropriate $\gamma$ in these cases will produce satellite operators with infinite rank on smooth concordance, but which represent the zero map on topological concordance. \\

\subsubsection{Genus one satellite operators with trivial Alexander polynomial} 
To illustrate this last point, let $F$ have genus one and let $\{a,b\}$ be the basis for $H_1(F;\Z)$. Imposing the condition that the Alexander polynomial of $\partial F$ be trivial implies that in terms of the given basis the Seifert form for $F$ has matrix   \[V=\left[\begin{array}{cc}n & m \\m-1 & k\end{array}\right],\ \text{where} \ nk=m(m-1).\]  Then $$lk_\Sigma(J,J')=(x,y)\left[\begin{array}{cc}2k & 1-2m \\1-2m & 2n\end{array}\right]\left(\begin{array}{c}x \\y\end{array}\right), $$ where $x=lk(\gamma,a)$, and $y=lk(\gamma,b)$. It follows that $lk_\Sigma(J,J')\neq 0$ as long as $(x,y)$ is not an integral multiple of the elements $$\begin{cases}(1,0)\text{ or }\left(\tfrac{n}{2m-1},1\right) & \text{ if } k=0\\
(0,1)\text{ or }\left(1,\tfrac{k}{2m-1}\right) & \text{ if } n=0\\
\left(\tfrac{m}{gcd(m,k)},\tfrac{k}{gcd(m,k)}\right)\text{ or }\left(\tfrac{m-1}{gcd(m-1,k)},\tfrac{k}{gcd(m-1,k)}\right)& \text{ if } n,k\neq 0
 \end{cases}.$$
 Any unknotted curve  $\gamma\subset S^3\setminus F$ whose homology class lies in the complement of the above vectors will produce a satellite operator with infinite rank on smooth concordance which is topologically trivial.  
\Cref{genus1} gives a specific example.
\begin{figure}[ht]
\centering
\begin{minipage}{0.5\textwidth}
        \centering
         \def\svgwidth{0.5\textwidth}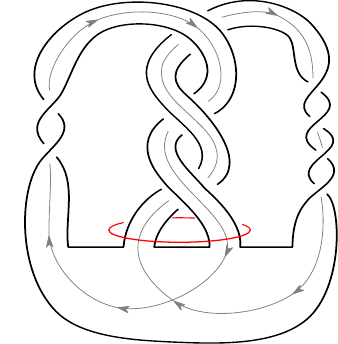
    \end{minipage}%
    \begin{minipage}{0.5\textwidth}
    The Seifert matrix for this surface with basis curves shown is $$V=\left[\begin{array}{cc}1 & 2 \\1 & 2\end{array}\right].$$ Since $(x,y)=(-1,1)$ is not an integer multiple of $(2,1)$ or $(1,1)$, the pattern defined by this particular choice of axis has infinite rank as a map on concordance.
    \end{minipage}
\caption{A genus 1 satellite operator acting as zero on topological concordance.}\label{genus1}
\end{figure}

\subsubsection{Twisted Whitehead doubles}
Let $P_k$ be the twist knot with $k$ full twists, and consider the curve $\gamma$  shown in \Cref{twist}. Viewing $P_k$ as a knot in the solid torus $S^3\setminus N(\gamma)$ defines a pattern for a satellite operator, the {\em positive clasped $k$-twisted Whitehead doubling operator}. Notice that  $P_0\subset S^3\setminus N(\gamma)$ defines the pattern for the positive untwisted Whitehead double. The Seifert surface  $F_{k}$ specified by \Cref{twist} has Seifert matrix \[V_{k}=\left[\begin{array}{cc}k & 0 \\-1 & -1\end{array}\right].\] The algorithm described in \cite{akbulut-kirby} to produce a surgery description for $\Sigma(P_k(U))$ yields the framed link $L$ shown to the right of \Cref{twist}, and it follows from the algorithm itself that the linking vector of either $J_i$ (the lifts of $\gamma$) with $L$ is precisely the linking vector of $\gamma$ with the cores of the 1-handles of $F_{k}$. Thus $\vec{lk}(J_1,L)=(1,0)$ and $l=lk_\Sigma(J_1,J_1')=-\frac{2}{-4k-1}\ne 0$ so that $P_k\subset S^3\setminus \gamma$ is of infinite rank for all $k$.  \\

\begin{figure}[ht]
    \centering
    \begin{tabular}{p{0.5\textwidth}cp{0.5\textwidth}}
    \def\svgwidth{0.3\textwidth}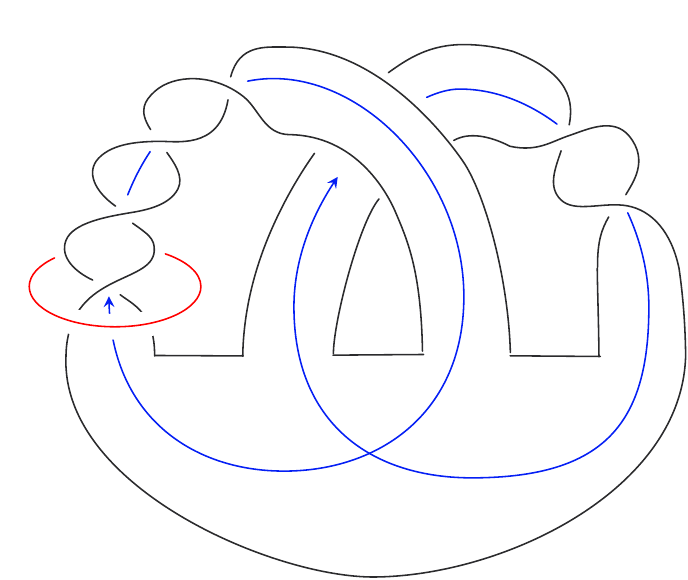&\hspace{0.1\textwidth}&
    \def\svgwidth{0.3\textwidth} 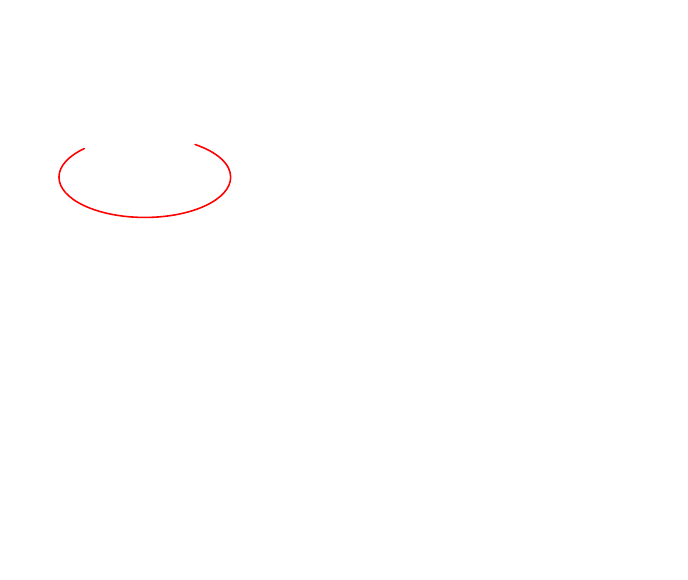\\
   A Seifert surface for the twist knot $P_k$, embedded in $S^3\setminus N(\gamma)\cong S^1\times D^2$. Shown is the case $k=2$. && Kirby diagram for $\Sigma(P_k(U))$ obtained from the Seifert surface for $P_k$. The complement of the lifts of $\gamma=\partial D^2$ (shown in red) is homeomorphic to $\Sigma(P_k)$, and $\Sigma(P_k(K))$ is obtained by replacing neighborhoods of $J_i$ with $S^3\setminus N(K)$.
    \end{tabular}
    \caption{The twisted whitehead doubling operator $P_k$ }\label{twist}
\end{figure}

\subsection{A few words about the case $l=0$}
While the above discussion shows that patterns with $l\neq 0$ are abundant and easily produced,  there are certainly  patterns with $l=0$. For example, if the link $P\sqcup \gamma$ defining an embedding $P\subset S^1\times D^2$ is split, then the operator $P:\mathcal{C}\to \mathcal{C}$ is constant with value $[P(U)]$. Similarly, if $P$ is slice in the complement of $\gamma$, then $P:\mathcal{C}\to \mathcal{C}$ is the zero map.  Such examples show that our result is in some sense optimal; without the linking number condition, there are examples of patterns whose associated maps have rank zero. \\

An interesting class of winding number zero patterns with $l=0$ is provided by those $P\sqcup \gamma$ which are {\em boundary links}, links whose components bound disjoint Seifert surfaces.  If $P\sqcup \gamma$ is a boundary link, then the lift of the Seifert framing of $\gamma=\partial D^2$ to $\Sigma(P(U))$ will clearly be zero.  Indeed, the Seifert surface for $\gamma$ in the complement of the Seifert surface for $P$ will lift to $\Sigma(P(U))$.  Thus a lift of $\gamma$ is null-homologous, and the Seifert framing lifts to the Seifert framing.  It would be interesting to find refined conditions that could address the structure of  satellite operators  produced in this realm.  \\

Of particular interest are iterated operators.  Let $P^r= P\circ P\circ \ldots \circ P$ denote the $r$--th iteration of a winding number zero pattern $P$, and let $P^r\sqcup \gamma_r$ be the 2-component link which defines the relevant embedding of the unknot in the solid torus. More precisely, denote by  $\st[r]$ the solid torus containing $P^r$, and $h_{r}:\st[r]\to N(P)\subset \st[1]$ the homeomorphism that defines $P^{r+1}$. Since the winding number of $P \subset S^1\times D^2$ is zero, there exists  a Seifert surface $S$ for $\gamma_1$ in $\st[1]$ that is disjoint from the first iterate $P$, and $F_r$ a Seifert surface for $P^r$ that lies in the interior of $\st[r]$. Then $h_{r}(F_{r})$ is a Seifert surface for $P^{r+1}$ contained in $N(P)$ and thus disjoint from $S$. This shows that $P^{r+1}\sqcup \gamma$ is a boundary link, and hence any iterated satellite operator of winding number zero will have $l=0$. This point seems to indicate that the technique utilized by \Cref{main}, while quite useful, is a first level obstruction with respect to the $P$-filtration of the concordance group defined in the introduction.  It would be quite interesting to develop tools  sensitive to 4-dimensional aspects of the JSJ decomposition which could analyze the higher terms in these filtrations.  \\

\bibliographystyle{alpha}
\bibliography{references}
\end{document}